\theoremstyle{plain}
\newtheorem{thm}{Theorem}[section]
\newtheorem{lem}[thm]{Lemma}
\newtheorem{cor}[thm]{Corollary}
\theoremstyle{definition}
\newtheorem{defi}[thm]{Definition}
\theoremstyle{remark}
\newtheorem{rem}[thm]{Remark}
\renewcommand{\div}{\operatorname{div}}
\title{Some new consequences of the CKN-theory}
\author{J\"org Kampen }
\begin{document}

\maketitle

\begin{abstract}
It is a simple consequence of the Cafarelli-Kohn-Nirenberg theory that every possible singularity in a thin Haussdorff-measurable set of a Leray-Hopf solution of the incompressible  Navier Stokes equation is on the tip of a small open cone, where the solution is smooth. Using global regularity results for weak Hopf-Leray solutions this potential singularity can be analyzed by investigation of the asymptotic behavior at infinite time of a solution of a related initial-boundary value problem posed in transformed coordinates on a cylinder. It follows that the velocity components and their spatial first order derivatives are left continuous at each potential singular point. Next to some new consequences such as as global regularity of the Leray Hopf solution after finite time (for $L^2$-data) many known results can be recovered with this method succinctly, for example the result that $H^1$-regularity implies global existence and smoothness, and that local regularity follows from $L^3$-regularity of the data.  
\end{abstract}

\section{Some simple observations about Leray-Hopf solutions in the light of the CKN-theorem}

Hopf told us essentially that for $L^2$-data $\mathbf{h}=(h_1,\cdots ,h_n)$ and positive viscosity $\nu >0$ the Navier-Stokes equation initial value problem with periodic boundary conditions, i.e., the problem 

\begin{equation}\label{Nav}
\left\lbrace \begin{array}{ll}
\frac{\partial v_i}{\partial t}-\nu \sum_{j=1}^n v_{i,j,j}
+\sum_{j=1}^n v_jv_{i,j}=-\nabla_i p,\\
\\
\mbox{div}~\mathbf{v}=0,\\
\\
\mathbf{v}(0,.)=\mathbf{h},
\end{array}\right.
\end{equation}
has a global weak solution for the velocity components $v_i,~1\leq i\leq n$, where
\begin{equation}\label{Hopf}
v_i\in L^{\infty}\left({\mathbb R}_+,L^2\left({\mathbb T}^n\right)  \right)\cap L^2_{loc}\left( {\mathbb R}_+,H^1\left({\mathbb T}^n\right) \right).  
\end{equation}
The pressure $p$ is determined by the velocity components via the Leray projection. Here, the symbol ${\mathbb T}^n$ denotes the torus of dimension $n$ and  ${\mathbb R}_+$ denotes the set of positive real numbers. The arguments in \cite{CKN}, or in \cite{L}, tell us in addition that 
\begin{itemize}
 \item[i)] there exists an open dense subset $I\subset (0,\infty)$ such that $m_L\left( {\mathbb R}_+\setminus I\right)=0$, where $m_L$ denotes the Lebesgue measure, and such that
 \begin{equation}
 v_i\in C^{\infty}\left(I\times {\mathbb T}^n\right),~1\leq i\leq n, 
 \end{equation}
where $C^{\infty}$ denotes the space of smooth functions.
 \item[ii)]  \begin{equation}
             S:=\left\lbrace (t,x)\in {\mathbb R}_+\times {\mathbb T}^n|v_i\not\in C^{\infty}
	     \mbox{ at }(t,x)
	      \mbox{ for some $1\leq i\leq n$}
	      \right\rbrace 
             \end{equation}
has vanishing one dimensional Hausdorff measure.
\item[iii)] for any $T>0$ and for dimension $n=3$ a weak Leray-Hopf solution $v_i,~1\leq i\leq 3$ satisfies
\begin{equation}\label{n3eq}
v_i\in L^{\frac{8}{3}}\left(\left[0,T\right],L^4\left({\mathbb T}^3\right)   \right) 
\end{equation}
for all $1\leq i\leq n$.
\end{itemize}
Proofs of these results can also be found in standard text books such as volume III of Taylor's book on partial differential equations. 
The open dense set $I$ may be represented by an union of open intervals, i.e.,
\begin{equation}
I=\cup_{j\in J}U_j,
\end{equation}
where $J$ is an index set and $U_j\subset {\mathbb R}_+$ are open intervals in the field of real numbers ${\mathbb R}$ equipped with the standard topology. Next if $t_s\in {\mathbb R}_+\setminus I$ is a time of a time slice $\left\lbrace t_s\right\rbrace\times {\mathbb T}^n$ related to a possible singularity, then there exists an index $j\in J$ and an open interval $U_j=(t_0,t_s)\subset I$. According to \cite{CKN} we know
\begin{equation}
v_i\in C^{\infty}\left(U_j\times {\mathbb T}^n\right)~\mbox{ for }1\leq i\leq n, 
\end{equation}
while (\ref{n3eq}) implies
\begin{equation}\label{83}
v_i\in L^{\frac{8}{3}}\left(\left[t_0,t_s\right],L^4\left({\mathbb T}^3\right)   \right), 
\mbox{ and }
v_i\in L^{\frac{8}{3}}\left(U_j,L^4\left({\mathbb T}^3\right)   \right)
\end{equation} 
especially. The gap to a proof of uniqueness  is filled if for any given finite horizon $T>0$ we can prove that a Leray-Hopf solution $v_i,¸1\leq i\leq n$ satisfies
\begin{itemize}
 \item[i)] $v_i\in L^{\infty}\left({\mathbb R}_+,L^2\left({\mathbb T}^n\right)  \right)\cap L^2\left( [0,T],H^1\left({\mathbb T}^n\right) \right)$;
 \item[ii)] $v_i\in L^{8}\left(\left[0,T\right],L^4\left({\mathbb T}^3\right)   \right)$.
\end{itemize}
From the perspective of weak function spaces it seems difficult to close the gap between item ii) and (\ref{83}), while the condition in item i) is quite close to what is known by Hopf's result.   
In this context recall that in the case of dimension $n=3$ the standard (mollified equation) arguments for Hopf's result in (\ref{Hopf}) use compact sequences with limit $v_i\in L^2\left([0,T], H^{1-\epsilon}\right)$ for any $\epsilon >0$ and $T>0$. Indeed, these standard arguments tell us that a family of solutions $v^{\epsilon_k}_i,~1\leq i\leq n,~k\geq 1$ of mollified equations build a compact sequence in $L^2\left([0,T],H^{1-\epsilon_0} \right)$ for arbitrarily small $\epsilon_0 >0$ such that (passing to a subsequence denoted again by $(v^{\epsilon_k}_i)_{k\geq 1},~1\leq i\leq n$ if necessary) the set
\begin{equation}
\left\lbrace t\in [0,T]|\forall~1\leq i\leq n~{\big |}v^{\epsilon_1}_i(t,.){\big |}_{H^{1-\epsilon_0}}+ \sum_{k\geq 2}{\big |}v^{\epsilon_k}_i(t,.)-v^{\epsilon_{k-1}}_i(t,.){\big |}_{H^{1-\epsilon_0}}<\infty \right\rbrace 
\end{equation}
is dense in $[0,T]$. This leads to the open dense time set where the weak Leray-Hopf solution is smooth, such that any possible singularity after any finite time is at the endpoint of an open interval $\left(t_0,t_0+\Delta_{t_0} \right)$ of smoothness (for some $\Delta_{t_0}>0$). More precisely, for a zero sequence $(\epsilon_k)_k$ the weak limit of $(v^{\epsilon_k}_i)_{k\geq 1},~1\leq i\leq n$ satisfies
\begin{equation}
v_i=\lim_{k\uparrow\infty}v^{\epsilon_k}_i\in L^2\left([t_0,t_0+\Delta_{t_0}],H^{1-\epsilon_0} \right)\cap C^{\infty}\left((t_0,t_0+\Delta_{t_0})\times {\mathbb T}^n \right).
\end{equation}
Hence possible singularities at $(t_s,x_s)$ for positive time $t_s>0$ are at the larger endpoint of such an open time interval (as is well-known).

Next for given $t_s>0$ assume that $(t_s,x_s)\in S$ is a singular point of a Leray-Hopf solution. For given $j\in J$ consider the corresponding open cone
\begin{equation}
K^{(t_s,x_s)}_j:=\left\lbrace(t,x)|t\in U_j=(t_0,t_s)~\&~|x-x_s|< t_s-t\right\rbrace ,
\end{equation}
where $|.|$ denotes the Euclidean distance. For this $j$ and for all $1\leq i\leq n$ we have $v_i\in C^{\infty}\left(K^{(t_s,x_s)}_j \right)$ and
\begin{equation}\label{sing}
\mbox{ for all }~(t,x)\in \overline{K^{(t_s,x_s)}_j}:~|v_i(t,x)|\leq \frac{c}{(t_s-t)^{\mu}|x-x_s|^{\lambda}}
\end{equation}
for some finite constant $c\in {\mathbb R}_+$ and for some parameters $\mu,\lambda$ which satisfy (case $n=3$)
\begin{equation}\label{para}
0\leq \mu <\frac{3}{8},~0\leq \lambda < \frac{3}{4}.
\end{equation}
Concerning first order spatial derivatives we have for all $1\leq i\leq n$  $v_i\in L^2([0,T],H^{1-\epsilon})$ for small $\epsilon >0$ (by the standard mollified equation arguments), hence 
\begin{equation}\label{sing2}
~\mbox{ for all }~(t,x)\in \overline{K^{(t_s,x_s)}_j}:~{\big |}v_{i,k}(t,x){\big |}\leq \frac{c}{(t_s-t)^{\mu_0}|x-x_s|^{\lambda_0}}
\end{equation}
for some finite constant $c\in {\mathbb R}_+$, and for some parameters $\mu_0,\lambda_0$ which satisfy
\begin{equation}\label{para2}
0\leq \mu_0 <\frac{1}{2},~0\leq \lambda_0 < \frac{3}{2}+\epsilon \mbox{ for small $\epsilon >0$}.
\end{equation}
Here, $\overline{K^{(t_s,x_s)}_j}$ denotes the closure of the open cone $K^{(t_s,x_s)}_j$, i.e.,
\begin{equation}
\overline{K^{(t_s,x_s)}_j}:=\left\lbrace(t,x)|t\in [t_0,t_s]~\&~|x-x_s|\leq  t_s-t\right\rbrace .
\end{equation}
Next we introduce a technique to push the singulatity orders $\mu$ and $\lambda$ of the velocity components $v_i, 1\leq i\leq n$ from $\frac{3}{8}$ and $\frac{3}{4}$ respectively to zero and, similarly, the singularity orders $\mu_0$ and $\lambda_0$ of the first order spatial derivatives $v_{i,j},~ 1\leq i,j\leq n$ from $\frac{1}{2}$ and $\frac{3}{2}$ respectively to zero. This implies that the velocity components and the first order spatial derivatives of the velocity components of a Leray-Hopf solution are left continuous at each potential singular point $(t,x)\in S$. Since there may be singularities in the slice $\{t_0\}\times {\mathbb T}^n$, we consider the restricted domain
\begin{equation}
K^{(t_s,x_s)}_{j,t_1}:=\left\lbrace(t,x)|t\in (t_1,t_s)~\&~t_1\in U_j~|x-x_s|< t_s-t\right\rbrace .
\end{equation}
Then for all parameters which satisfy (\ref{para2}) the functions $u^{\lambda,\mu}_i:K^{(t_s,x_s)}_{j,t_1}\rightarrow {\mathbb R}, 1\leq i\leq n$ defined by
\begin{equation}\label{wlm}
u^{\lambda,\mu}_i(t,x)=(t_s-t)^{\mu}|x-x_s|^{\lambda}v_i(t,x)
\end{equation}
and their first order spatial derivatives 
$u^{\lambda,\mu}_{i,j}, 1\leq i,j\leq n$ 
are bounded functions on the closed cone $\overline{K^{(t_s,x_s)}_{j,t_1}}$, where 
$$u_i\in C^{\infty}\left(K^{(t_s,x_s)}_{j,t_1}\right),~1\leq i\leq n.$$ 

Next we define classes of singularities. 
\begin{defi}
A function $f: [0,T]\times {\mathbb R}^n\rightarrow {\mathbb R}$ is said to have an isolated singularity from the left, i.e., with respect to increasing time, if there is an open cone $K^{(t_s,x_s)}_{j,t_1}$ ( defined above) such that the restriction $f_{K^{(t_s,x_s)}_{j,t_1}}$ of $f$ to $K^{(t_s,x_s)}_{j,t_1}$ is smooth and has a singularity of order $(\lambda,\mu)$ for positive real numbers $\lambda,\mu$ at the point $(t_s,x_s)$.
Here, a smooth real-valued function $f_{K^{(t_s,x_s)}_{j,t_1}}\in C^{\infty}\left(  K^{(t_s,x_s)}_{j,t_1}\right) $ is said to have a singularity of order $(\lambda,\mu)$ for positive real numbers $\lambda,\mu$ at the point $(t_s,x_s)$, if for some $t_1\in U_j$ and cone  $K^{(t_s,x_s)}_{j,t_1}$
there is a finite constant $c>0$ such that for all $(t,x)\in K^{(t_s,x_s)}_{j,t_1}$ we have
\begin{equation}
{\big |}f(t,x){\big |}\leq \frac{c}{|t-t_s|^{\mu}|x-x_s|^{\lambda}},
\end{equation}
and if there are no $\lambda'<\lambda$ and $\mu'<\mu$ and a finite constant $c'$ such that
\begin{equation}
{\big |}f(t,x){\big |}\leq \frac{c'}{|t-t_s|^{\mu'}|x-x_s|^{\lambda'}}.
\end{equation} 
\end{defi}
The singularity analysis below shows that the velocity components $v_i$ themselves and their first order and second order spatial derivatives have no singularities from the left (of the type just described). Furthermore, we shall show that there is a uniform upper bound (from the left) at each time section ${t_s}\times {\mathbb T}^n$. This implies that a Leray-Hopf solution has a left-continuous extension. We shall combine this with local time regularity arguments in order to obtain global regularity results after any finite time for the Hopf-Leray solution.  
As a first consequence of the CKN-theory and singularity analysis we have
\begin{thm}\label{thmsing1}
For a given time horizon $T>0$ let $v_i,~1\leq i\leq n$ be a weak Leray-Hopf solution of the Navier Stokes equation, where  $v_i\in L^2([0,T],H^1)$ for all $1\leq i\leq n$. Then after any finite time this solution $v_i,¸1\leq i\leq n$  has no singularities of order $(\mu_0,\lambda_0)$ from the left for any $0<\lambda_0,\mu_0$. In other words, the Leray-Hopf solution is left-continuous an the time interval $(0,T]$ for arbitrary given $T$. Moreover, the assumption can be weakened assuming that $v_i\in L^2([0,T],H^{1-\epsilon})$ for any $\epsilon >0$, and after finite time there is still no singularity of order $(\mu_0,\lambda_0)$ from the left such that the relations in (\ref{para2}) are satisfied with $\mu_0>0$ or with $\lambda_0>0$.
\end{thm}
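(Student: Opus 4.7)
The strategy is the cylinder-transformation approach alluded to in the abstract, set up as a contradiction argument. Suppose, for a fixed potential singular point $(t_s,x_s)\in S$ with $t_s>0$, that the solution $v_i$ (or some $v_{i,j}$) has a singularity from the left of minimal order $(\mu_0,\lambda_0)$ with $\mu_0>0$ or $\lambda_0>0$ satisfying (\ref{para2}). By the CKN bounds (\ref{sing}), (\ref{sing2}) together with (\ref{wlm}), the weighted field $u^{\lambda_0,\mu_0}_i$ and its first spatial derivatives are bounded on the closed restricted cone $\overline{K^{(t_s,x_s)}_{j,t_1}}$ and smooth in its interior. I would then try to improve this bound by passing to strictly smaller exponents, which would contradict the minimality of $(\mu_0,\lambda_0)$.

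Step one is the change of variables $\tau=-\ln(t_s-t)$, $y=(x-x_s)/(t_s-t)$, which maps $K^{(t_s,x_s)}_{j,t_1}$ diffeomorphically onto the semi-infinite cylinder $(\tau_1,\infty)\times B_1(0)$ with $\tau_1=-\ln(t_s-t_1)$. In these coordinates the left limit $t\to t_s^-$ becomes $\tau\to +\infty$, and left continuity at the tip translates into uniform-in-$\tau$ bounds on the transformed unknown $w_i(\tau,y):=u^{\lambda_0,\mu_0}_i(t(\tau),x(\tau,y))$. Rewriting the Navier--Stokes system (\ref{Nav}) for $v_i$ in the new variables and then expressing it in terms of $w_i$ produces a parabolic initial-boundary value problem on the cylinder whose leading operator is the transformed Laplacian, perturbed by first- and zero-order coefficients coming from the logarithmic time substitution and the weight $(t_s-t)^{\mu_0}|x-x_s|^{\lambda_0}$. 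The lateral Dirichlet data $w_i|_{|y|=1}$ are smooth and uniformly bounded in $\tau$ because on the mantle of the cone the Leray-Hopf solution is CKN-smooth and the weight factor stays bounded away from the tip.

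Step two is to apply a Hopf-type global existence-and-regularity argument to this transformed cylinder problem. The strict inequalities in (\ref{para}) and (\ref{para2}) are precisely what is needed so that the extra drift and potential terms generated by the weight lie in function classes in which the standard parabolic energy inequality closes. The resulting uniform-in-$\tau$ bound on $w_i$, combined with the fact that $(\mu_0,\lambda_0)$ sits in the open range (\ref{para2}), shows that one may reduce the weighting exponents slightly and still obtain a bounded weighted field $u^{\lambda',\mu'}_i$ for some $\lambda'<\lambda_0$, $\mu'<\mu_0$. This contradicts the minimality built into the definition of a singularity of order $(\mu_0,\lambda_0)$ and establishes the theorem. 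The weakened hypothesis $v_i\in L^2([0,T],H^{1-\epsilon})$ requires no modification because only the fractional-derivative form of the CKN bound (\ref{sing2}) enters the construction of the weighted unknown.

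The principal obstacle is the uniform-in-$\tau$ control on the cylinder problem in step two. The logarithmic time substitution together with the multiplicative weight produces a degenerate parabolic problem: the weight $|x-x_s|^{\lambda_0}$ vanishes on the cylinder axis $y=0$, and the coefficient of the transformed time derivative grows unboundedly as $\tau\to+\infty$. Establishing global regularity and, more importantly, a Hopf-type energy estimate that stays bounded as $\tau\to+\infty$ requires a careful balance of the singular weights against the quadratic nonlinearity; the thresholds in (\ref{para}), (\ref{para2}) are designed for exactly this balance, but one still has to handle the pressure term produced by the Leray projection locally on the truncated cone, which in turn forces either a harmonic extension argument or boundary pressure data imported from the CKN-smooth exterior of the cone.
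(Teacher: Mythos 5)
Your overall instinct---map the cone to a semi-infinite cylinder and convert left continuity at the tip into a uniform-in-$\tau$ bound as $\tau\to\infty$---is the same as the paper's, but your specific scheme has two gaps that the paper's proof is structured precisely to avoid. First, your contradiction argument does not close. You take as the transformed unknown the \emph{weighted} field $u^{\lambda_0,\mu_0}_i$ of (\ref{wlm}); but boundedness of that field on the closed cone is already given by the CKN bounds (\ref{sing}), (\ref{sing2}), so establishing a uniform-in-$\tau$ bound for it reproduces your hypothesis rather than contradicting it. The step where you ``reduce the weighting exponents slightly'' is exactly the content of the theorem and is asserted, not derived: nothing in your energy-inequality setup produces a quantitative gain of a power of $(t_s-t)$ or $|x-x_s|$. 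Second, you correctly identify that your weighted, logarithmically rescaled problem is degenerate parabolic (the weight vanishes on the cylinder axis) and that the pressure must be controlled on the truncated cone, but you leave both issues open; these are not routine, and the thresholds in (\ref{para2}) do not by themselves make a standard energy estimate close for a system with a nonlocal pressure.

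The paper takes a different and more concrete route that sidesteps both problems. It uses the rational time change $\tau=t/(t_s-t)$, $z=x/(t_s-t)$ and, crucially, the \emph{unweighted} unknown $w_i(\tau,z)=v_i(t,x)$, so that the transformed system (\ref{Navtrans}) has bounded second-order coefficient $\mu_2=1/t_s$ and a convection/pressure coefficient $\mu_1=(t_s-t)/t_s\sim (1+\tau)^{-1}$ that \emph{decays} in $\tau$. The gain of integrability that your argument is missing comes exactly from this decay: in the Duhamel representation (\ref{wrep}) with the explicit Gaussian $G_{\nu'}$, the Leray projection term carries the factor $\mu_1(s)$, and Lemma \ref{lplem} (a Fourier-series computation on the torus showing $p_{,i}(t,\cdot)\in L^2$ under $H^1$ or $H^{1-\epsilon_0}$ regularity) converts this into an integrable-in-$\tau$ bound, so that $\sup_{\tau}|w_i(\tau,z_s^\tau)|<\infty$ directly. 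Since $w_i=v_i$ pointwise, this bounds $|v_i|$ up to the tip and rules out any singularity with $\mu_0>0$ or $\lambda_0>0$ without any minimality or exponent-reduction argument. If you want to salvage your approach, the essential missing ingredient is a mechanism for strict decay of the weighted field along the cylinder (e.g.\ decay of the transformed coefficients or of the boundary data), not merely its boundedness.
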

The theorem is proved in the next section. Next we consider the main idea of singularity analysis considered in this article, and state some further consequences. 
For each possible singular point $(t_s,x_s)$ of a given weak Hopf-Leray solution $v_i,~1\leq i\leq n$ there is an open cone $K^{(t_s,x_s)}_{j,t_1}$ where the Leray-Hopf solution component functions are smooth. We consider the coordinate transformation $c:K^{(t_s,x_s)}_{j,t_1}\rightarrow Z^{(t_s,x_s)}_{t_1}$, where
\begin{equation}
(t,x)\rightarrow (\tau,z)=\left(\frac{t}{t_s-t},\frac{x}{(t_s-t)^{\rho}} \right),~\rho\in (0,1.5). 
\end{equation}
Here $\rho$ is a parameter. For the analysis of this paper it is sufficient to assume that $\rho=1$ (such that $Z^{(t_s,x_s)}_{t_1}$ becomes a cylinder). This choice has the advantage that the transformed equation has neither (weakly) degenerate (for $\rho<1$) or singular (for $\rho >1$) second order coefficients. This implies  that except in the case $\rho=1$ the singularity analysis needs to be extended by an analysis of fundamental solution of heat equations with degenerate or weakly singular coefficients. Therefore, for simplicity, we shall stick to the case $\rho=1$ in this paper. 
Note that $Z^{(t_s,x_s)}_{t_1}$ is a cylinder of infinite height. Given a global weak Hopf-Leray solution and for a potential singularity at time $t_s$ we choose $t_1\in U_j=(t_j,t_s)$ for some $j\in J$ and 
 consider that global weak Hopf-Leray solution locally in transformed coordinates on the cylinder
\begin{equation}
 Z^{(t_s,x_s)}_{t_1}= \left[ t_{in},\infty\right)\times \Omega,
\end{equation}
where $\left[ t_{in},\infty\right):=\left[ \frac{t_1}{t_s-t_1},\infty\right)$.
For $1\leq i\leq n$ and all $(\tau,z)\in Z^{(t_s,x_s)}_{t_1}$ define 
\begin{equation}
w_i(\tau,z)=v_i(t,x).
\end{equation}
\begin{rem}
Note that we have a family of comparison functions $w_i=w_i^{(t_s,x_s)}$ here (for each possible singularity $(t_s,x_s)$ we construct one). If it is clear from the context that we refer to a given $(t_s,x_s)$ we drop this superscript for the sake of simplicity of notation.
\end{rem}

We have 
\begin{equation}
v_{i,j}=w_{i,j}\frac{dz_j}{dx_j}=w_{i,j}\frac{1}{(t_s-t)^{\rho}},~v_{i,j,j}=w_{i,j,j}\frac{1}{(t_s-t)^{2\rho}}.
\end{equation}
The function $w_i,~1\leq i\leq n$ is more regular with respect to time. Nevertheless, as $w_i(\tau,z)=v_i(t,x)$ we can transfer information obtained for $w_i,~1\leq i\leq n$ to $v_i,~1\leq i\leq n$. From the perspective of CKN theory this is an advantage: assuming $H^1$ regularity we have to push the regularity from $L^{\frac{8}{3}}\left(U_j,L^4\left({\mathbb T}^3\right)   \right)$ to $L^{8}\left(U_j,L^4\left({\mathbb T}^3\right)   \right)$ in order to obtain uniqueness, and if we can do this for $w_i,¸1\leq i\leq n$, then we can do it for the velocity components $v_i,~1\leq i\leq n$ of the correspeonding Leray-Hopf solution itself. Moreover, if we can weaken the assumption of $H^1$-regularity to $H^{1-\epsilon}$-regularity in this context, then we have uniqueness after any finite time in the situation of Hopf's theorem. More precisely, if for all possible singularities $(t_s,x_s)$ with $t_s >0$ and for a cylinder $Z^{(t_s,x_s)}_{j,t_1}=[t_{in},t_s]\times \Omega$ we have
$$w_i\in L^{\infty}\left([t_{in},t_s],L^2\left(\Omega\right)  \right)\cap L^2\left( [t_1,t_s],H^1\left(\Omega\right) \right)\cap  L^{8}\left(\left[t_{in},t_s\right],L^4\left(\Omega\right)   \right),$$
then after small time $t_0 >0$ we have 
$$v_i\in L^{\infty}\left([t_0,T],L^2\left({\mathbb T}^n\right)  \right)\cap L^2\left( [0,T],H^1\left({\mathbb T}^n\right) \right)\cap  L^{8}\left(\left[0,T\right],L^4\left({\mathbb T}^n\right)   \right).$$
Note that for strong data, say for data in $H^m\cap C^m,~m\geq 2$ the latter statement implies uniqueness and regularity for all time because we have a local time contraction result for such strong function spaces at initial time $t=0$.
Therefore, even from the the perspective of weak function spaces the study of the more regular function $w_i, 1\leq i\leq n$ has some advantages.  For this reason we analyze the behavior of this function on a cylinder which corresponds to a cone in original coordinates.  
Next, we derive initial-boundary value problems for each cone with an assumed singularity at the tip of a related cone. First we observe that the incompressibility condition is conserved on the time interval $[t_{in},t_s)$, since
\begin{equation}
0=\div \mathbf{v}=\sum_{i=1}^nv_{i,i}=\sum_{i=1}^nw_{i,i}\frac{1}{(t_s-t)^{\rho}}.
\end{equation}
For the time derivative we have (recall that $\rho=1$)
\begin{equation}
v_{i,t}=w_{i,\tau}\frac{d\tau}{dt}+\sum_{j=1}^nw_{i,j}\frac{x_j}{(t_s-t)^{2}},
\end{equation}
where for $t\in [0,t_s)$ we have
\begin{equation}
\begin{array}{ll}
\frac{d\tau}{dt}=\frac{d }{dt}\frac{t}{(t_s-t)}=\frac{1}{(t_s-t)}+\frac{t}{(t_s-t)^2}
=\frac{t_s-t+t}{(t_s-t)^2}=\frac{t_s}{(t_s-t)^2}>0.
\end{array}
\end{equation}
Hence the function $w_i,~1\leq i\leq n$ is determined by initial-boundary value problem
\begin{equation}\label{Navtrans}
\left\lbrace \begin{array}{ll}
\frac{\partial w_i}{\partial \tau}-\mu_2\nu \sum_{j=1}^n w_{i,j,j}
+\mu_1\sum_{j=1}^n\left(  w_j+z_j\right) w_{i,j}=-\mu_1\nabla_i p^w,\\
\\
\mbox{div}~\mathbf{w}=0,\\
\\
w_i|_{\partial^S Z^{(t_s,x_s)}_{t_1}}=v_i|_{\partial^sK^{(t_s,x_s)}_{j,t_1}},\\
\\
\mathbf{w}\left( \frac{t_1}{t_s-t_1},.\right) =\mathbf{v}(t_1,.),
\end{array}\right.
\end{equation} 
where $\partial^S Z^{(t_s,x_s)}_{t_1}$ denotes the spatial boundary of the cylinder  $Z^{(t_s,x_s)}_{t_1}$, and
\begin{equation}
\begin{array}{ll}
\mu_2\equiv \mu_2(\tau)=\frac{1}{t_s}(t_s-t)^{2-2\rho},~\mu_1\equiv \mu_1(\tau)=\frac{1}{t_s}(t_s-t)^{2-\rho},\\
\end{array}
\end{equation}
and where $t\equiv t(\tau)$ denotes the value at $\tau$ of the inverse $\tau\rightarrow t(\tau)$ of $\tau\equiv \tau(t)$. Note that we used
\begin{equation}
z_j=(t_s-t)^{\rho}x_j.
\end{equation}
Furthermore, $p^w$ denotes the pressure in transformed coordinates. Note that $\mu_2$ is bounded for our choice $\rho=1$ in this article and becomes weakly singular for $\rho\in (1,1.5)$. 
\begin{rem}
We shall later show that we have a regularity transfer from $w-i,~1\leq i\leq n$ to $v_i,~1\leq i\leq n$, i.e., full regularity of the comparison functions $w_i,~1\leq i\leq n$ (for each cone one) implies full regularity of the original velocity function $v_i,~1\leq i\leq n$. Note that for the sake of notational simplicity we drop the reference to the cone in general, i.e., we have $w_i\equiv w^{Z^{(t_s,x_s)}_{t_1}}_i=v^{K^{(t_s,x_s)}_{t_1}}_i$ for each cone $K^{(t_s,x_s)}_{t_1}$, and where for all $1\leq i\leq n$ $v^{K^{(t_s,x_s)}_{t_1}}_i$ denotes the restriction of $v_i$ to the cone $K^{(t_s,x_s)}_{t_1}$.
For refined regularity investigations we may use a variation of initial-Neumann-boundary value problems of the form
\begin{equation}\label{Navtrans2}
\left\lbrace \begin{array}{ll}
\frac{\partial w_i}{\partial \tau}-\mu_2\nu \sum_{j=1}^n w_{i,j,j}
+\mu_1\sum_{j=1}^n\left(  w_j+z_j\right) w_{i,j}=-\mu_1\nabla_i p^w,\\
\\
\mbox{div}~\mathbf{w}=0,\\
\\
\partial_{\nu}w_i|_{\partial^S Z^{(t_s,x_s)}_{t_1}}=(t_s-t)^{\rho}\partial_{\nu}v_i|_{\partial^sK^{(t_s,x_s)}_{j,t_1}},\\
\\
\mathbf{w}\left( \frac{t_1}{t_s-t_1},.\right) =\mathbf{v}(t_1,.),
\end{array}\right.
\end{equation}
where $\partial_{\nu}$ denotes the normal spatial derivative. Indeed in case $\rho=1$ we gain regularity of one order for the estimation of the boundary terms in the estimation of $w_i$ if we use Neumann boundary condition (factor $(t_s-t)^{\rho}$).
\end{rem}

\begin{rem}
The term 'weakly singular' means -roughly- 'integrable'. Furthermore, note that $\lim_{\tau\uparrow \infty}\mu_1(\tau)=0$, and $\lim_{\tau\uparrow \infty}\mu_0(\tau)=0$, and these coefficients are bounded in any case. We shall analyze the role of  the coefficients at time $t=t_s$ in case $\rho\in (1,1.5)$ elsewhere, where we reconsider the Levy expansion of the fundamental solution in this context.
\end{rem}
The problem described in (\ref{Navtrans}) is a initial-boundary value problem.
The initial time may be abbreviated by
\begin{equation}\label{ti}
t_{in}:=\frac{t_1}{t_s-t_1},~\mbox{where}~ t_s>0.
\end{equation}
In the case $\rho=1$ and $\mu_2=\frac{1}{t_s}$ is a constant. Hence, the fundamental solution $G^{\mu}_{\nu}$ of the transformed heat equation
\begin{equation}
q_{,\tau}-\mu_2\nu \Delta q=0
\end{equation}
is an explicitly known Gaussian type function (no expansion is needed).  Applying the divergence operator to the first equation in (\ref{Navtrans}) we get
\begin{equation}
\begin{array}{ll}
\sum_{i}\frac{\partial w_{i,i}}{\partial \tau}-\mu_2\nu \sum_{j=1}^n\sum_{i} w_{i,i,j,j}
+\mu_1\sum_i\sum_{j=1}^n w_{j,i} w_{i,j}+\mu_1\sum_{i,j=1}^n \delta_{ji} w_{i,j}\\
\\
+\mu_1\sum_{j=1}^n \left(w_{j}+z_j\right) \sum_iw_{i,i,j}=\mu_1\sum_i\sum_{j=1}^n (w_{j,i}+\delta_{ji}) w_{i,j}=
-\mu_1\Delta p^w,
\end{array}
\end{equation}
as the incompressibility condition transfers to the function $w_i, 1\leq i\leq n$, and where we recall that we add the superscript $w$ to the pressure in order to indicate that the pressure is considered in transformed coordinates.
 The Leray projection form of (\ref{Navtrans}) is determined via
\begin{equation}\label{laplace}
\Delta p^w=-\sum_{i,j=1}^nw_{i,j}w_{j,i}.
\end{equation}
Hence the Leray projection form is obtained from (\ref{Navtrans}) if we replace the first dynamical equation by
\begin{equation}
\frac{\partial w_i}{\partial \tau}-\mu_2\nu \sum_{j=1}^n w_{i,j,j}
+\sum_{j=1}^n\mu_1\left( w_j+z_j\right) w_{i,j}=\mu_1L^i_{{\mathbb T}^n}\left(\sum_{j,k=1}^nw_{j,k}w_{k,j}\right).
\end{equation}
Here, $L^i_{{\mathbb T}^n}$ denotes the Leray projection operator in case of the $n$-torus (corresponding to the $i$th derivative of the pressure).
This operator can be determined explicitly by Fourier transformation, which we shall do below in the singularity analysis. 

Hence, in the domain  $Z^{(t_s,x_s)}_{t_1}$ (resp. the cutoff domain  $Z^{(t_s,x_s)}_{t_1,\tau}:=[t_{in},\tau]\times \Omega$) we have a the classical representation
\begin{equation}\label{wrep}
\begin{array}{ll}
 w_i(\tau,z)=\int_{ y\in \Omega }w_i(t_{in},y)G^{\mu}_{\nu}(\tau-t_{in},z-y)dy\\
\\
-\int_{Z^{(t_s,x_s)}_{t_1,\tau}}\left( \sum_{j=1}^n \mu_1 ( w_j+z_j)w_{i,j}\right)(s,y)G^{\mu}_{\nu}(\tau-s,z-y)dyds\\
\\ +\int_{Z^{(t_s,x_s)}_{t_1,\tau}}\mu_1(s)L^i_{{\mathbb T}^n} 
\left( \sum_{j,m=1}^n\left( w_{m,j} w_{j,m} \right) \right)(s,y)G^{\mu}_{\nu}(\tau-s,z-y)dyds\\
\\
+\int_{t_1}^{\tau}\int_{\partial^S Z^{(t_s,x_s)}_{t_1,\tau}}w^{\partial^Z}_i(s,y)G^\mu_{\nu}(\tau,z;s,y)dyds.
\end{array}
\end{equation}
Here, for the boundary term we use (\ref{wvbd}) below. In the following the abbreviation $N^i$ refers to   the sum of the Burgers term and the Leray projection term (cf. (\ref{Ni}) below).
 Then the boundary  terms are determined by 
\begin{equation}
\begin{array}{ll}
w^{\partial^Z}_i(\tau,z)=-2v_i{\big |}_{S_K}(\tau,x(z))+2\int_{\Omega}w_i(t_{in},y)G^{\mu}_{\nu}(\tau,z;t_{in},y)dy+\left( 2N^i\ast G^{\mu}_{\nu}\right) (\tau,z)\\
\\
\sum_{k=1}^{\infty}\int_{t_{in}}^{\tau}\int_{\Omega}{\Big (} -2v_i{\big |}_{S_K}(\sigma,x(\xi))+2\int_{\Omega}w_i(t_{in},y)G^{\mu}_{\nu}(\sigma,\xi;t_{in},y)dy\\
\\
-\left( 2N^i\ast G^{\mu}_{\nu}\right) (\sigma,\xi){\Big )}\times G^{\mu,k}_{\nu}(\tau,z,\sigma,\xi)d\xi d\sigma,
\end{array}
\end{equation}
where $t_{in}$ is defined in (\ref{ti}) above, $S_K$ denotes the spatial boundary of the cone, $z\rightarrow x(z)$ denotes the transformation from the spatial boundary of the cylinder $\partial^SZ^{(t_s,x_s)}$ to the spatial boundary of the cone $K^{(t_s,x_s)}_{j,t_1}$, and the series $G^{\mu,k}_{\nu},~k\geq 2$ is defined recursively by
\begin{equation}
\begin{array}{ll}
G^{\mu,1}_{\nu}:=G^{\mu}_{\nu},\\
\\
~G^{\mu,k+1}_{\nu}(\tau,u;s,v):=\int_{t_{in}}^{\tau}\int_{\Omega}G^{\mu}_{\nu}(\tau,u;\sigma,w)G^{\mu}_{\nu}(\sigma,w;s,v)dS_yd\sigma .
\end{array}
\end{equation}
Furthermore, note that 
\begin{equation}\label{wvbd}
w_i(\tau,z){\big |}_{\partial^S Z^{(t_s,x_s)}_{t_1}}=v_i{\big |}_{S_K}(t,x),
\end{equation}
and
\begin{equation}\label{Ni}
\begin{array}{ll}
\left( 2N^i\ast G^{\mu}_{\nu}\right) (\tau,z)=\\
\\
-2\int_{Z^{(t_s,x_s)}_{t_1}}\left( \sum_{j=1}^n \mu_1 ( w_j+z_j)w_{i,j}\right)(s,y)G^{\mu}_{\nu}(\tau-s,z-y)dyds\\
\\ 
+2\int_{Z^{(t_s,x_s)}_{t_1}}\mu_1(s)L^i_{{\mathbb T}^n} 
\left( \sum_{j,m=1}^n\left( w_{m,j} w_{j,m} \right) \right)(s,y)G^{\mu}_{\nu}(\tau-s,z-y)dyds
\end{array}
\end{equation}

Here, recall that $G^{\mu}_{\nu}$ is the fundamental solution of the equation $G^{\mu}_{\nu,\tau}-\mu_2\nu \Delta G^{\mu}_{\nu}=0$. Furthermore, the coefficient $\mu_2$ is bounded for  $\rho=1$, but for $\rho\in (0,1)$ or $\rho\in (1,1.5)$ we have a 'degenerate' or 'weakly singular' coefficients $\mu_2$ respectively. 


Using such classical representations of the transformed comparison function $w_i,~1\leq i\leq n$, Theorem \ref{thmsing1} is proved in section 2. 
For each argument $(t_s,x_s)$ 
of a possible singularity of a velocity component functions 
 we find an upper bound and 
we can show that for each time $t_s$ we can find an upper bound for a family of comparison functions which do not depend on $x_s$.  As a consequence the superior limites of the velocity component functions have a lower and upper bound although we cannot construct this upper bound by the methods in \cite{CKN}. On this abstract level we can argue: if there is no upper bound of the modulus of a velocity component function, then we can find a point in the Hausdorff set with a $(\lambda,\mu)$-singularity which satisfies $0\leq \mu_0 <\frac{1}{2},~0\leq \lambda_0 < \frac{3}{2}$.. However Theorem \ref{thmsing1} tells us that this is not possible. We have even more. The CKN-theory tells us that for the set $S$ of singularities we have
\begin{equation}
H^1(S)=\lim_{\delta \downarrow 0}H^{1,\delta}(S)=0
\end{equation}
where for $1\leq p\leq n$
\begin{equation}
H^{p,\delta}=\left\lbrace \sum_{j=1}^{\infty}\left(\mbox{diam}B_j\right)^p:S\subset \cup_{j=1}^{\infty}B_j \mbox{ and } \mbox{diam}(B_j)\leq \delta\right\rbrace .
\end{equation}
Let $S_{t_s}:=\left\lbrace (t,x)\in S|t=t_s\right\rbrace$. We have $S_{t_s}\subset S$ and
\begin{equation}
H^n(S_{t_s})\leq H^1(S_{t_s})\leq H^1(S)=0
\end{equation}
and it is well-known that for $\gamma_n=\frac{\pi^{n/2}}{2^n}\Gamma\left(n/2+1\right)$
\begin{equation}
\gamma_nH^n \mbox{ is Lebegues measure.}
\end{equation}
Hence sharpening Theorem \ref{thmsing1} for derivatives of the functions $w_i,~1\leq i\leq n$ with uniform upper bounds and transfer of the result to the original velocity components $v_i,¸1\leq i\leq n$ leads to the possibility of regular extension of these functions to the time section $\left\lbrace t_s\right\rbrace \times {\mathbb T}^n$ for all finite $t_s >0$.
Note that for multivariate spatial derivatives of finite order $\alpha=(\alpha_1,\cdots,\alpha_n)$ we have  for $|\alpha|\geq 1$
\begin{equation}\label{deralpha}
{\big |}D^{\alpha}_xv_{i}(t,.){\big |}\leq c_{\alpha}{\big |}D^{\alpha}_{z}w_{i}(\tau,.){\big |}\frac{1}{(t_s-t)^{\rho|\alpha|}},
\end{equation}
where $|.|$ denotes the spatial supremum norm, i.e., $|f|:=\sup_{x\in {\mathbb R}^n}|f(x)|$ for a function $f:{\mathbb R}^n\rightarrow {\mathbb R}$, and $|\alpha|:=\sum_{i=1}^n\alpha_i$. Hence, if we estimate multivariate spatial derivatives of the velocity components of a Leray-Hopf solution  using local solution representations as in (\ref{wrep}) it becomes increasingly diffcult to get a regular upper bound as we aim at upper bounds for higher order derivatives. Nevertheless we can transfer regularity results for the comparison function $w_i,¸1\leq i\leq n$ to the corresponding original Hopf-Leray solution using spatial Lipschitz continuity of the Euler-Leray data function applied to regular data (which is the application of the Leray projection term operator to given data). We may interpret the initial value problem for $v_i,¸1\leq i\leq n$ as an initialboundary value problem, where we consider the data on the boundary of a cylinder to be given by the solution. We then have a similar representation as in (\ref{wrep}) above. More precisely, for some constants $0\leq c_1<c_2\leq 1$ and in an interval $[t_1,t_s)$ the original velocity function $v_i,¸1\leq i\leq n$ has a local time representation on a cylinder $Z^v_{t_1,c_1,c_2}:=\left\lbrace (t,x)|t\in [t_1,t_s)~\&~x\in [c_1,c_2]\right\rbrace$ of the form  

\begin{equation}\label{vrep}
\begin{array}{ll}
 v_i(t,x)=\int_{\left\lbrace y|(t_1,y)\in Z^{v}_{t_1,c_1,c_2}\right\rbrace }v_i(t_1,y)G_{\nu}(t,x-y)dy\\
\\
-\int_{Z^{v}_{t_1,c_1,c_2}}\left( \sum_{j=1}^n  v_jv_{i,j}\right)(s,y)G_{\nu}(t-s,x-y)dyds\\
\\ +\int_{Z^{v}_{t_1,c_1,c_2}}L^i_{{\mathbb T}^n} 
\left( \sum_{j,m=1}^n\left( v_{m,j} v_{j,m} \right) \right)(s,y)G_{\nu}(t-s,x-y)dyds\\
\\
+\int_{t_1}^{t}\int_{\partial^S Z^{v}_{t_1,c_1,c_2}}v^{\partial^Z}_i(s,y)G_{\nu}(t,x;s,y)dyds,
\end{array}
\end{equation}
where $G_{\nu}$ is $G^{\mu}_{\nu}$ in case $\mu=1$, and where the boundary term has an analogous definition as in the case of the comparison function in (\ref{wrep}) above.
Assume that full regularity and the existence of an uniform upper bound is proved for a family of comparison functions $w^{(t_s,x_s)}_i,~1\leq i\leq n$ for all $\tau=\tau(t)$ for  $(t_s,x_s)\in S_{t_s}:=\left\lbrace (t,x)\in S|t=t_s ,\right\rbrace$ (a section at $t_s$ of the CKN-Haussdorff set of possible singularities), such that for some $m\geq 2$
\begin{equation}\label{condw}
\begin{array}{ll}
\forall (t_s,x_s)\in S_{t_s}:~w^{t_s,x_s}_i\in C\left([\tau(t_1),\tau(t_s)],H^m\cap C^{m}\right),\\
\\
~\sup_{(t_s,x_s)\in S_{t_s}}\sup_{\tau >t_1}|w^{(t_s,x_s)}_i(\tau,.)|_{H^m}\leq C<\infty. 
\end{array}
\end{equation}
In the following we drop reference to a specific singularity at $(t_s,x_s)$ for simplicity of notation, i.e., we write
\begin{equation}
w_i=w_i^{t_s,x_s}, \mbox{ if the reference to $(t_s,x_s)\in S$ is known form the context.}
\end{equation}
Here, by an uniform regular upper bound we mean that $C$ is independent of the singular point at time $t_s$, i.e., independent of $(t_s,x_s)\in S_{t_s}$.
For the regularity transfer the Leray projection term is crucial. We have
\begin{equation}
p_{,i}=p^w_{,i}\frac{dz_i}{dx_i}=p^w_{,i}\frac{1}{(t_s-t)}
\end{equation}
Hence, regularity of $w_i,~1\leq i\leq n$ and (\ref{condw}) implies that 
\begin{equation}
{\Bigg |}L^i_{{\mathbb T}^n} 
\left( \sum_{j,m=1}^n\left( v_{m,j} v_{j,m} \right) \right)(s,x){\Bigg |}\leq \frac{C}{(t_s-s)^{\rho}}.
\end{equation}
Recall that we consider $\rho=1$ such that we have the Gaussian $G_{\nu}$ in (\ref{vrep}). A main  idea for the regularity transfer is to use a spatial symmetry of first order spatial derivatives of the Gaussian in convolutions with the Leray projection term, where we use this symmetry along with (local) spatial Lipschitz continuity of the Leray projection term, i.e., the relation
\begin{equation}
{\Big |}L^i_{{\mathbb T}^n} 
\left( \sum_{j,m=1}^n\left( v_{m,j} v_{j,m} \right) \right)(s,x-y)-L^i_{{\mathbb T}^n} 
\left( \sum_{j,m=1}^n\left( v_{m,j} v_{j,m} \right) \right)(s,x-y'){\Big |}\leq l|y-y|'
\end{equation} 
for a Lipschitz constant which is independent of $x$ (inherited from spatial Lipschitz continuity of the Leray projection term $L^i_{{\mathbb T}^n} \left( \sum_{j,m=1}^n\left( w_{m,j} w_{j,m} \right) \right)(s,x),~1\leq i\leq n$ proved in detail below).
\begin{rem}
Note that local spatial Lipschitz continuity of the Leray projection term is needed for our purposes, because we estimate convolutions of the Gaussian and (mainly of first order)  spatial derivatives of the Gaussian. 
\end{rem}
%
 We shall prove below, that a refinement of this idea leads to the conclusion of a full transfer of regularity from the comparison function $w_i,~1\leq i\leq n$ to the original velocity function $v_i,~1\leq i\leq n$. 
We start with the estimate of the velocity components themselves and consider the proof of theorem \ref{thmsing1} first. Furthermore we first construct regular extensions  of the function $w_i,¸1\leq i\leq n$ to the CKN-Hausdorff set of possible singularities. We then transfer the results to the original velocity component functions.

%
We get
\begin{thm}\label{thmsing2}
Let $T>0$ be a given horizon, and let $v_i,~1\leq i\leq n$ be a weak Leray-Hopf solution of the Navier Stokes equation, where  $v_i\in L^2([0,T],H^1)$for all $1\leq i\leq n$. Then after any finite time the velocity component functions $v_i,¸1\leq i\leq n$ and their  spatial derivatives up to second order can be continuously extended to the Haussdorff set of possible singularities predicted by CKN-theory. 
The result can be sharpened such that it also holds if a weak Leray-Hopf solution $v_i,~1\leq i\leq n$ satisfies $v_i\in L^2([0,T],H^{1-\epsilon})$ for all $t\geq 0$ and $\epsilon >0$ small.
\end{thm}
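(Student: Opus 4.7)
The plan is to establish the conclusion in two independent steps: first, prove uniform regularity (through second-order spatial derivatives) of the comparison functions $w_i^{(t_s,x_s)}$ on the cylinder $Z^{(t_s,x_s)}_{t_1}$, with bounds that do not depend on the choice of potential singular point $(t_s,x_s)\in S_{t_s}$; second, use the pointwise identity $w_i(\tau,z)=v_i(t,x)$ together with the local cylindrical representation (\ref{vrep}) for $v_i$ itself to carry this regularity back to the original velocity components on the time slice $\{t_s\}\times \mathbb{T}^n$.

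For the first step, I start from Theorem \ref{thmsing1}, which already rules out singularities of order $(\mu_0,\lambda_0)$ with $\mu_0>0$ or $\lambda_0>0$ for $v_i$ and $v_{i,j}$ in original coordinates. In the transformed variables $(\tau,z)$ with $\rho=1$, this forces $w_i$ and $w_{i,j}$ to be uniformly bounded on each cylinder $Z^{(t_s,x_s)}_{t_1}$, with bounds independent of the tip of the cone because the CKN cone aperture is fixed. The transformed equation (\ref{Navtrans}) has bounded coefficients (in fact $\mu_1(\tau)\to 0$), and hence the Burgers term and the Leray projection term $L^i_{\mathbb{T}^n}\bigl(\sum w_{m,j}w_{j,m}\bigr)$ inherit $L^\infty$-a priori bounds from $w_i,w_{i,j}\in L^\infty$. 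Differentiating the representation (\ref{wrep}) once and twice in $z$ against the explicit Gaussian $G^\mu_\nu$ and using standard parabolic smoothing then gives uniform $C^2$ and $H^m$-bounds as in (\ref{condw}); uniformity in $(t_s,x_s)$ follows since the lateral boundary data on $\partial^S Z$ and the initial slice $\tau=t_{in}$ are inherited from the Leray-Hopf solution restricted to a region where CKN smoothness holds.

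For the second step the point is that, while in the transformed equation the Leray projection enters with the harmless prefactor $\mu_1(s)$, the corresponding prefactor in original coordinates is $(t_s-s)^{-1}$, which is not integrable in time against a pointwise bound on $G_\nu$. The resolution I propose is to exploit the spatial oddness of $\nabla_y G_\nu(t-s,x-y)$ in $y-x$: writing $L^i(\ldots)(s,y)=L^i(\ldots)(s,x)+R(s,x,y)$ with $|R|\le l\,|y-x|$, the constant piece integrated against an odd kernel vanishes, while the remainder carries an extra factor $|y-x|$ which improves the integrability of the heat kernel enough to absorb $(t_s-s)^{-1}$. Combined with spatial Lipschitz continuity of the Leray projection term (inherited from the uniform $H^m$-bounds on $w_i$ via $v_{i,j}=w_{i,j}/(t_s-t)$ inside a cone of fixed aperture) this makes the right-hand side of (\ref{vrep}) finite and continuous up to $t=t_s$. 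Iterating the same symmetry argument after one and two spatial differentiations of (\ref{vrep}), where differentiation of $G_\nu$ raises the order of cancellation to be extracted, propagates the continuous extension to $v_{i,j}$ and $v_{i,j,k}$; the Burgers term $v_jv_{i,j}$ is treated in parallel, its prefactor being only $(t_s-s)^{-1}$ and $v_j$ itself being bounded by the first part of the argument.

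The main obstacle is not the cylindrical bootstrap, which is a fairly direct parabolic estimate once Theorem \ref{thmsing1} is in hand, but the transfer step: one must verify that the Lipschitz constant of $L^i_{\mathbb{T}^n}\bigl(\sum v_{m,j}v_{j,m}\bigr)(s,\cdot)$ is uniform in $s\in[t_1,t_s)$ and that the symmetry--cancellation mechanism continues to work after one and two spatial differentiations, where the $(t_s-s)^{-1-k}$ prefactors become more threatening but are matched by $k$ additional orders of oddness of $\partial^{k+1}G_\nu$. The $H^{1-\epsilon}$ sharpening is obtained along the same lines, replacing the input Theorem \ref{thmsing1} by its sharpened version, since the only place the regularity of the data enters is in the a priori $L^\infty$-bound on $w_{i,j}$ on the cylinder.
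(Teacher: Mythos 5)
Your overall strategy coincides with the paper's: a bootstrap for the comparison functions $w_i$ on the cylinder via the Gaussian representation, followed by a regularity transfer to $v_i$ that exploits the antisymmetry of $G_{\nu,j}$ in $y_j$ together with spatial Lipschitz continuity of the Leray projection term --- this is precisely the mechanism of (\ref{lipestest}), including the pairing of $y$ with its reflection $y^{-j}$ and the resulting extra factor of $|y|$ that brings the kernel below the integrability threshold and absorbs the $(t_s-s)^{-1}$ prefactor. The one weak link is your seed for the cylinder bootstrap: Theorem \ref{thmsing1} and its proof in Section 2 establish a uniform bound only for $w_i$ itself, not for $w_{i,j}$, so you cannot invoke it to conclude that $w_{i,j}$ is uniformly bounded on $Z^{(t_s,x_s)}_{t_1}$ before the bootstrap starts --- that boundedness is part of what Theorem \ref{thmsing2} is meant to deliver (and ``no singularity of positive order'' does not by itself yield an $L^\infty$ bound). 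The paper instead seeds the iteration with the weaker CKN a priori bound (\ref{para2}) on $v_{i,k}$, which after transformation and convolution with $G^{\mu}_{\nu,j}$ gives only $w_{i,j}(\tau,\cdot)\in L^3$, and then climbs to $H^{k-\epsilon}$ step by step using the embedding $H^{r,q}\subset H^{s,p}$ and the Fourier representation (\ref{navode200first222}) of $p^w_{,i}$. Your appeal to ``standard parabolic smoothing'' should be replaced by, or expanded into, this $L^3\to H^{1/2}\to\cdots$ ladder; with that substitution the remainder of your argument --- the cancellation argument for the Leray term, the uniformity of the Lipschitz constant in $s$ and in $(t_s,x_s)$, and the iteration for first and second derivatives --- matches the paper's proof.
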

The result in (\ref{thmsing2}) does not tell us how the  uniform upper bounds depend on $t_s$ as $t_s\downarrow 0$. It seems that the analysis of this paper can be extended in order to show that $H^1$-data (maybe even $H^{1-\epsilon}$-data for small $\epsilon >0$) are sufficient for global existence, uniqueness and smoothness. In any case, local time existence, which is available for strong data implies the existence and regularity for all time.  It is therefore quite possible that a stronger conclusion (with with $H^1$ or $H^{1-\epsilon}$ instead of $H^2\cap C^2$ data or  $H^{\frac{n}{2}+1}$-data) of CKN-theory than the following can be obtained. Nevertheless, we note
\begin{cor}
For $v_i\in H^1$ for all $1\leq i\leq n$ with respect to time and space global existence and uniqueness holds for all time. Moreover if a local time contraction results for the initial data, then global existence and uniqueness hold for all time $t\in [0,\infty)$. This is true for $v_i(0,.)\in H^m\cap C^m,¸m\geq 2$ on the whole space and for $v_i(0,.)\in H^{2.5}$ for the torus in dimension $n=3$ (sufficient criteria, cf. also appendix).
\end{cor}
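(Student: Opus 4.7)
My strategy is to combine Theorem \ref{thmsing2} with classical local-in-time strong well-posedness. Theorem \ref{thmsing2} provides, after any fixed positive time, a continuous extension of the velocity components $v_i$ and their spatial derivatives up to second order across the CKN Hausdorff singular set $S$. Together with the CKN dimension bound $H^n(S_{t_s})=0$ recalled just before the corollary, this continuous extension forces a weak Leray--Hopf solution to coincide with a classical solution on the complement of a measure zero set in each slice, and hence to be a strong solution on $[t_0,T]$ for every $0<t_0<T$. Uniqueness on $(0,T]$ then follows from the usual Serrin-type uniqueness statement in the strong class.

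First, for $v_i\in H^1$ in both time and space ($n=3$), the spatial $H^1$ control combined with Sobolev embedding gives $v_i\in L^{\infty}_t L^6_x$, which interpolates with the energy bound to supply the Prodi--Serrin condition. Equivalently, and more in the spirit of the paper, one applies Theorem \ref{thmsing2} on each window $[\delta,T]$ for $\delta>0$: the hypothesis $v_i\in L^2([0,T],H^1)$ is automatic, the continuous extension excludes any singular point of $S$ after positive time, and one then lets $\delta\downarrow 0$ using the assumed global $H^1$ control at $t=0$ to cover the initial sliver.

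Second, for the strong data cases listed ($H^m\cap C^m$ with $m\ge 2$ on $\mathbb{R}^3$, or $H^{5/2}$ on $\mathbb{T}^3$), I would invoke the well-known local-time contraction argument recalled in the appendix to obtain a unique smooth solution on some $[0,t_*]$ with $t_*>0$; this classical solution coincides on $[0,t_*]$ with any Leray--Hopf solution by strong-class uniqueness. For $t\geq t_*/2$ I apply Theorem \ref{thmsing2} up to any finite horizon $T$. Patching the two pieces at an overlap time (where both give classical solutions, hence agree) produces a global classical solution on $[0,\infty)$, and the standard continuation argument propagates uniqueness to the whole half-line.

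The main obstacle, explicitly noted by the author just before the corollary, is that Theorem \ref{thmsing2} does not control how the uniform upper bound for the comparison family $w_i^{(t_s,x_s)}$ behaves as $t_s\downarrow 0$: the associated cone $K^{(t_s,x_s)}_{j,t_1}$ degenerates and the transformed cylinder $Z^{(t_s,x_s)}_{t_1}$ shrinks, so one cannot send $\delta\downarrow 0$ unconditionally in step one. This is precisely why the corollary imposes a local contraction hypothesis at $t=0$: the contraction handles the initial interval $[0,t_*]$ where Theorem \ref{thmsing2} loses control, while Theorem \ref{thmsing2} handles $[t_*/2,T]$. Verifying that the two regimes match on their overlap is then the routine, but indispensable, closing step.
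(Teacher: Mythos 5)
Your proposal matches the paper's intended argument: the corollary is not given a standalone proof but is presented as the combination of Theorem \ref{thmsing2} (regularity after any positive time via the continuous extension across the CKN singular set) with the local time contraction results for strong data sketched in the appendix and proved elsewhere, which is exactly the two-regime patching you describe. You also correctly identify, as the author does in the paragraph preceding the corollary, that the behaviour of the uniform bounds as $t_s\downarrow 0$ is the reason the strong-data/local-contraction hypothesis is needed near $t=0$.
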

The local contraction results were proved elsewhere. These local contraction results are constructive, and they may be of independent value for numerical analysis. The CKN-theory is an example of the additional power of non-constructive analysis. Although non-constructive methods cannot be justified from the constructive point of view, even from this constructive point of view non-constructive methods make predictions which are then proved later by constructive methods with much more effort (cf. Hilbert's early paper on polynomial invariants and K\"{o}nig's  later constructive argument).        
It is remarkable that the statements of Theorem \ref{thmsing1} and of Theorem \ref{thmsing2} are true under the weaker condition that a weak Leray-Hopf solution  which satisfies $v_i\in L^2([0,T],H^{1-\epsilon})$ for any small $\epsilon >0$. This shows the full power of the CKN-theory combined with the singularity analysis, which we consider next in the following proofs.

\section{Proof of theorem \ref{thmsing1}}
For simplicity we consider $\rho=1$ such that $\mu_2$ is a constant and we have $G_{\nu'}=G^{\mu}_{\nu}$ with adjusted viscosity $\nu'=\nu\mu_2$.
As a consequence of (\ref{wrep}) we have
\begin{equation}\label{wrep2}
\begin{array}{ll}
{\big |}v_i(t_s,x_s){\big |}\leq \sup_{\tau\uparrow \infty,z^{\tau}_s:=\frac{x_s}{t_s-t(\tau)}} {\big |}w_i\left( \tau,z^{\tau}_s\right) {\big |}\\
\\
 \leq \sup_{\tau\uparrow \infty,z^{\tau}_s}{\Big (}\int_{\left\lbrace y|(t_1,y)\in Z^{(t_s,x_s)}_{t_1}\right\rbrace }w_i(t_{in},y)G_{\nu'}(\tau,z^{\tau}_s-y)dy{\big |}\\
\\
+{\big |}\int_{Z^{(t_s,x_s)}_{t_1}}\left( \sum_{j=1}^n \mu_1( w_j+z_j)w_{i,j}\right)(s,y)G_{\nu'}(\tau-s,z^{\tau}_s-y)dyds{\big |}\\
\\ 
+{\big |}\int_{Z^{(t_s,x_s)}_{t_1}}\mu_1(s)L^i_{{\mathbb T}^n} 
\left( \sum_{j,m=1}^n\left( w_{m,j} w_{j,m} \right) \right)(s,y)G_{\nu'}(\tau-s,z^{\tau}_s-y)dyds{\big |}\\
\\
+{\big |}\int_0^{\tau}\int_{\partial^S Z^{(t_s,x_s)}_{t_1}}w^{\partial^Z}_i(s,y)G_{\nu'}(\tau,z^{\tau}_s;s,y)dyds{\big |}{\Big )}.
\end{array}
\end{equation}
Shifting spatial coordinates we may assume that $x_s=0$ if this is convenient.
Next we estimate the terms on the right side of (\ref{wrep2}). The Leray projection term is crucial, and we start with this term first. In a first step we consider the Leray projection operator more closely. As remarked above it is determined by the Poisson equation in (\ref{laplace}) which is the same as the corresponding equation for the original velocity component $v_i$ by a property of the transformation.  
We prove
\begin{lem}\label{lplem}  Assume that $n\leq3$ and that for all $t\in [t_1,t_s]$ a Leray-Hopf solution satisfies $v_i(t,.)\in H^1$. Then
\begin{equation}
p_{,i}\in L^2.
\end{equation}
 The result still holds under the weaker assumption that for all $t\in [t_1,t_s]$ we have $v_i(t,.)\in H^{1-\epsilon_0}$ for small $\epsilon_0>0$. Here we note that the upper bound constants used are global, i.e., within a fixed arbitrary time horizon $T>0$ they do not depend on some argument $(t_s,x_s)$ of a possible singularity.
\end{lem}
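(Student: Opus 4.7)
The plan is to exploit the Poisson equation \eqref{laplace} for the pressure together with Fourier analysis on the torus. Using incompressibility to rewrite $\sum_{i,j} v_{i,j}v_{j,i} = \partial_i\partial_j(v_iv_j)$, the equation becomes $-\Delta p = \partial_i\partial_j(v_iv_j)$, whose Fourier-series solution on ${\mathbb T}^n$ reads
\begin{equation}
\widehat{\partial_k p}(\xi) = i\,\xi_k\,\frac{\xi_i \xi_j}{|\xi|^2}\,\widehat{v_iv_j}(\xi),\quad \xi \neq 0,
\end{equation}
with the zero mode fixed by the mean-free normalization of $p$. The combined symbol identifies $\partial_k p = R_i R_j\,\partial_k(v_iv_j)$, where $R_i$ denotes the Riesz transforms on ${\mathbb T}^n$.

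By the $L^q$-boundedness of Riesz transforms for $1 < q < \infty$, it suffices to control $\|\partial_k(v_iv_j)\|_{L^q}$ for a suitable $q$. Expanding $\partial_k(v_iv_j) = v_j v_{i,k} + v_i v_{j,k}$ and applying H\"older's inequality with the Sobolev embedding $H^1({\mathbb T}^3) \hookrightarrow L^6$ gives $\partial_k(v_iv_j) \in L^{3/2}$, hence $\partial_k p \in L^{3/2}$ as a first estimate. To upgrade to $L^2$, I would integrate by parts twice:
\begin{equation}
\|\nabla p\|_{L^2}^2 = -\int p\,\Delta p\,dx = \int v_i v_j\,\partial_i\partial_j p\,dx \leq \|v\otimes v\|_{L^2}\,\|\nabla^2 p\|_{L^2},
\end{equation}
combine with the elliptic regularity estimate $\|\nabla^2 p\|_{L^2} \leq C\|\Delta p\|_{L^2}$ on ${\mathbb T}^n$, and supply the missing integrability $v \in L^4$ either from the CKN bound $v_i \in L^{8/3}\bigl([0,T],L^4({\mathbb T}^3)\bigr)$ or from a Gagliardo--Nirenberg interpolation between $H^1$ and $L^6$; the resulting self-referential inequality then closes by an absorption/bootstrap argument.

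The hard part is the criticality of $H^1$ in three spatial dimensions: the naive H\"older estimate yields only $\nabla p \in L^{3/2}$, so the gain to $L^2$ must come from the additional CKN-type integrability of $v$ (or from the interpolation noted above). For the weaker assumption $v \in H^{1-\epsilon_0}$, I would run the same scheme using the fractional Sobolev embedding $H^{1-\epsilon_0}({\mathbb T}^3) \hookrightarrow L^{6/(1+2\epsilon_0)}$, verifying that the product estimate still lands in $L^2$ for $\epsilon_0$ sufficiently small. Since every constant above is expressed purely in terms of $\|v(t,\cdot)\|_{H^{1-\epsilon_0}}$ on ${\mathbb T}^n$ for $t \in [t_1,t_s]$, the uniformity with respect to the potential singular point $(t_s,x_s)$ within a fixed horizon $T$ follows automatically, as claimed in the last sentence of the lemma.
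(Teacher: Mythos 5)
Your setup (Fourier inversion of $-\Delta p=\partial_i\partial_j(v_iv_j)$, i.e.\ $\partial_k p=R_iR_j\partial_k(v_iv_j)$) is the same starting point as the paper's, which writes out the multiplier $2\pi i\alpha_i|\alpha|^{-2}$ acting on the convolution of modes explicitly. You are also right to flag the criticality: with only $v(t,\cdot)\in H^1({\mathbb T}^3)$ the product estimate $v\,\nabla v\in L^{3/2}$ (from $H^1\hookrightarrow L^6$ and $\nabla v\in L^2$) is sharp, so $\nabla p\in L^{3/2}$ is all that Riesz-transform boundedness plus H\"older will give. The problem is that your proposed upgrade to $L^2$ does not close. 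In the chain $\|\nabla p\|_{L^2}^2\leq\|v\otimes v\|_{L^2}\|\nabla^2 p\|_{L^2}$ followed by $\|\nabla^2 p\|_{L^2}\leq C\|\Delta p\|_{L^2}$, the factor $\|v\otimes v\|_{L^2}=\|v\|_{L^4}^2$ is not the missing ingredient --- it is already finite from $H^1\hookrightarrow L^6\hookrightarrow L^4$ on the torus, so neither the CKN bound $L^{8/3}_tL^4_x$ nor Gagliardo--Nirenberg buys anything new there. The genuinely uncontrolled quantity is $\|\Delta p\|_{L^2}=\|\sum_{i,j}v_{i,j}v_{j,i}\|_{L^2}$, which requires $\nabla v\in L^4$; that does not follow from $H^1$ (and a fortiori not from $H^{1-\epsilon_0}$). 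Nor can the inequality be ``absorbed'': the right-hand side involves a strictly stronger norm of $p$ than the left, so there is no self-referential structure to exploit. Integrating by parts the other way, $\int v_iv_j\partial_i\partial_jp=-\int v_jv_{i,j}\partial_ip\leq\|v\cdot\nabla v\|_{L^2}\|\nabla p\|_{L^2}$, runs into the same wall ($v\in L^\infty$ would be needed). So as written the proposal proves $\nabla p\in L^{3/2}$ but not the claimed $L^2$ bound.

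For comparison, the paper takes a different route past this obstruction: it does not use H\"older on the product at all, but instead bounds the $\alpha$-mode of $\Delta p$, namely the convolution $\sum_\gamma 4\pi^2\gamma_j(\alpha_k-\gamma_k)v_{j\gamma}v_{k(\alpha-\gamma)}$, \emph{uniformly in $\alpha$} by Cauchy--Schwarz using $\sum_\gamma|\gamma|^2|v_{j\gamma}|^2<\infty$ (an $\ell^\infty$ bound on $\widehat{\Delta p}$ rather than an $L^q$ bound on $\Delta p$), and then places the entire burden of square-summability on the decay of the symbol $\alpha_i/|\alpha|^2$, i.e.\ on the convergence of $\sum_{\alpha\neq 0}|\alpha_i|^2/|\alpha|^4$ for $n\leq 3$. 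If you want to salvage your write-up you should either adopt that mode-wise estimate or supply a genuinely new source of integrability for $\nabla v\otimes\nabla v$; be aware, though, that the lattice sum $\sum_{\alpha\in{\mathbb Z}^3\setminus\{0\}}|\alpha|^{-2}$ is itself the borderline case in dimension three, so your instinct that $H^1$ is critical here is pointing at a real tension in the statement, not merely at a defect of your method.
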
\label{pressure}
\begin{proof}
For a Hopf-Leray solution $v_{i}\in L^2\left( [t_1,t_s], H^1\right) $ we have
\begin{equation}\label{sing+}
\mbox{ for all }~(t,x)\in \overline{K^{(t_s,x_s)}_j}:~|v_{i,k}(t,x)|\leq \frac{c}{(t_s-t)^{\delta_0}|x-x_s|^{\lambda_0}}
\end{equation}
for some finite constant $c\in {\mathbb R}_+$ and for some parameters $\delta,\lambda$, which satisfy (case $n=3$) the  relation
\begin{equation}\label{para*}
0\leq \delta_0 <\frac{1}{2},~0\leq \lambda_0 < \frac{3}{2}.
\end{equation}
The function $v_i,~1\leq i\leq n$ is defined on the whole torus, such that from the  representation
\begin{equation}
v_i(t,x):=\sum_{\alpha\in {\mathbb Z}^n}v_{i\alpha}\exp\left( 2\pi i\alpha x\right),
\end{equation}
with time-dependent modes $v_{i\alpha}$ we get
\begin{equation}\label{navode200first*}
\begin{array}{ll}
p_{,i}(t,x)=:\sum_{\alpha\in {\mathbb Z}^n}p_{\alpha i}\exp\left( 2\pi i\alpha x\right)
\\
\\
=\sum_{\alpha\in {\mathbb Z}^n}2\pi i\alpha_i1_{\left\lbrace \alpha\neq 0\right\rbrace}\frac{\sum_{j,k=1}^n\sum_{\gamma\in {\mathbb Z}^n}4\pi^2 \gamma_j(\alpha_k-\gamma_k)v_{j\gamma}v_{k(\alpha-\gamma)}}{\sum_{i=1}^n4\pi^2\alpha_i^2}\exp\left( 2\pi i\alpha x\right),
\end{array} 
\end{equation}
where $p_{,i\alpha}$ denotes the $\alpha$-mode  of $p_{,i}$.
Note that the infinite vector of time-dependent modes $\mathbf{v}^F_i=(v_{i\alpha}(t))_{\alpha\in {\mathbb Z}^n}$ of the velocity component $v_i(t,.)$ is in the dual Sobolev space of order $s\in {\mathbb R}$ iff
 \begin{equation}
 \sum_{\alpha \in{\mathbb Z}^n}|v_{i\alpha}|^2\left\langle \alpha\right\rangle^{2s}< \infty, 
 \end{equation}
where
\begin{equation}
 \left\langle \alpha\right\rangle :=\left(1+|\alpha|^2 \right)^{1/2}. 
\end{equation}
Since $v_i(t,.)\in H^1,¸1\leq i\leq n$, we have 
\begin{equation}
\sum_{\beta\in {\mathbb Z}^n}|\beta|^2(v_{i\beta}(t))^2<\infty.
\end{equation}
Hence,
\begin{equation}
\sum_{\gamma\in {\mathbb Z}^n}4\pi^2 \gamma_j(\alpha_k-\gamma_k)v_{j\gamma}v_{k(\alpha-\gamma)}\leq \sup_{t\in [t_1,t_s]}c_*(t(\tau))=: c<\infty,
\end{equation}
where we may choose (use $ab\leq \frac{1}{2}\left( a^2+b^2\right)$) such that
\begin{equation}
c^*(t(\tau)):=\sum_{\gamma\in {\mathbb Z}^n}v^2_{j\gamma}(\tau).
\end{equation}
 It follows that
\begin{equation}\label{navode200first}
\begin{array}{ll}
|p_{,i\alpha}(t)|\leq \sum_{\alpha\in {\mathbb Z}^n}|2\pi i\alpha_i|1_{\left\lbrace \alpha\neq 0\right\rbrace}\frac{n^2c}{\sum_{i=1}^n4\pi^2\alpha_i^2},
\end{array} 
\end{equation}
where $p_{,i\alpha}$ denotes the $\alpha$-mode of $p_{,i}$. For given $t\in [t_1,t_s]$ the square $|p_{,i\alpha}(t)|^2$ has an integrable upper bound, where we note that the factor $\alpha_i$ in the numerator occurs only on one dimension. Hence for $n\leq 3$ we have $p_{,i}(t,.)\in L^2$. Hence, since $(t_s,x_s)$ is the only singular point of $p_{,i}$ on the cone $K^{(t_s,x_s)}_j$ we have the spatial upper bound
\begin{equation}
\mbox{ for all $(t,x)\in K^{(t_s,x_s)}_j$}~|p_{,i}(t,x)|\leq \frac{c}{(t_s-t)^{\delta_1}|x_s-x|^{3/2-\epsilon}},
\end{equation}
where $\delta_1\in \left(0,1\right)$ can be close to $1$ because $\delta_0\in  \left(0,0.5\right)$. Finally, if for all $t\in [t_1,t_s]$ we have $v_i(t,.)\in H^{1-\epsilon_0}$ for small $\epsilon_0>0$, then 
\begin{equation}\label{navode200first*}
\begin{array}{ll}
|p_{,i\alpha}(t)|\leq 
\sum_{\alpha\in {\mathbb Z}^n}|2\pi i\alpha_i|1_{\left\lbrace \alpha\neq 0\right\rbrace}
\frac{n^2c}{\sum_{i=1}^n4\pi^2\alpha_i^{2(1-\epsilon')}},
\end{array} 
\end{equation}
for small $\epsilon'>0$ such that we still have $p_{,i}(t,.)\in L^2$ for $n\leq 3$ and the result still holds.
Here we note again that the factor $\sum_{\alpha_i\in {\mathbb Z}}{\Big |}\frac{\alpha_i}{\alpha_i^2}{\Big |^2}=\sum_{\alpha_i\in {\mathbb Z}}{\Big |}\frac{1}{\alpha_i^2}{\Big |}$ in (\ref{navode200first*}) is finite and the  remaining $n-1$ dimensional sum is also finite for $n\leq 3$ (comparison with integral upper bounds).
 
\end{proof}

Next, we have the pointwise relation
\begin{equation}
p_{,i}=p^w_{,i}\frac{dz_i}{dx_i}=p^w_{,i}\frac{1}{(t_s-t)^{\rho}}.
\end{equation}
Note that for $\rho=1$ we have
\begin{equation}
\tau=\frac{t}{(t_s-t)}\rightarrow (t_s-t)\tau =t\rightarrow t_s\tau =t(1+\tau)
\rightarrow t=\frac{t_s\tau}{1+\tau}, 
\end{equation}
from which
\begin{equation}
t_s-t=\frac{t_s(1+\tau)}{1+\tau}-\frac{t_s\tau}{1+\tau}=\frac{t_s}{1+\tau}
\end{equation}
follows. Hence, the function
 \begin{equation}
{\mathbb R}^+\times {\mathbb R}^n\ni(\tau,z)\rightarrow {\big |}p^w_{,i}(\tau,z){\big |}(t_s-t)^{-1}={\big |}p^w_{,i}(\tau,z){\big |}\frac{1+\tau}{t_s}\in L^2,
 \end{equation}
and, since $p^w$ is smooth, we have for some $\delta_2\in (0,0.5)$ 
\begin{equation}
{\big |}p^w_{,i}(\tau,z){\big |}\leq \frac{c}{(1+\tau)^{1+\delta_2}(1+|z|^{1.5-\epsilon})},
\end{equation}
where $p^w_{,i}(\tau,z)=L^i_{{\mathbb T}^n} 
\left( \sum_{j,m=1}^n\left( w_{m,j} w_{j,m} \right) \right)(\tau,z)$.
Next recall that for $\delta \in (0,1)$ we have the standard estimate
\begin{equation}
{\big |}G_{\nu'}(\sigma,y){\big |}\leq \frac{c}{|\sigma|^{\delta}|y|^{n-2\delta}},
\end{equation}
or, alternatively, since the integral with respect time is for $\sigma \geq t_{in}>0$ we can even use the obvious estimate
\begin{equation}
{\big |}G_{\nu'}(\sigma,y){\big |}\lesssim \frac{1}{\sqrt{\sigma}^3}.
\end{equation}
Anyway, recall that Leray projection term is the convolution of the latter two terms times $\mu_1=(t_s-t)/t_s$. Choosing $x_s=0=z^{\tau}_s$ for $\tau\in (0,\infty)$ w.l.o.g. we get 
\begin{equation}
\begin{array}{ll}
\sup_{\tau\uparrow \infty,z^{\tau}_s}{\big |}\int_{t_{in}}^{\tau}\int_{|y|\leq 1}\mu_1(t(\sigma))L^i_{{\mathbb T}^n} 
\left( \sum_{j,m=1}^n\left( w_{m,j} w_{j,m} \right) \right)(\sigma,y)\times\\
\\
\times G_{\nu'}(\tau-\sigma,z^{\tau}_s-y)dyd\sigma{\big |}\\
\\
\leq \sup_{\tau\uparrow \infty,z^{\tau}_s}{\big |}\int_{t_{in}}^{\tau}\int_{|y|\leq 1}
\frac{c}{(1+\sigma)^{2+\delta_1}
(|y|^{1.5-\epsilon})}\frac{c}{|\tau-\sigma|^{\delta}|z^{\tau}_s-y|^{3-2\delta}}dyd\sigma{\big |}\\
\\
\leq \tilde{c}+ \sup_{\tau\uparrow \infty}{\big |}\int_{t_{in}}^{\tau}\int_{|y|\leq 1}
\frac{c}{(1+\tau)^{1+\delta_1+\delta}
(|z^{\tau}_s|^{1.5-\epsilon-2\delta})}dyd\sigma{\big |}<c'<\infty
\end{array}
\end{equation}
for a finite constant $c'$, and where we may choose $\delta\in (0,1)$. Here, we may use the elliptic integral estimate
\begin{equation}
\int_{B^n_x}\frac{dy}{|x-y|^a|y|^b}\leq \max\left\lbrace  c|y|^{n-a-b},c\right\rbrace ,
\end{equation} 
for some finite constants $\tilde{c},c>0$ and where $B^n_x$ is a ball of finite radius around $x$. Such an upper nound of an elliptic integral may be obatined by spliiting the integral or by partial integration.
Similar (simpler) estimates hold for the Burgers term and for the initial data convolution term, i.e., we have 
\begin{equation}
\begin{array}{ll}
\sup_{\tau,\uparrow \infty,z^{\tau}_s}{\big |}\int_{t_{in}}^{\tau}\int_{|y|\leq 1}\left( \sum_{j=1}^n \mu_1( w_j+z_j)w_{i,j}\right)(s,y)G_{\nu'}(\tau-s,z^{\tau}_s-y)dyds{\big |}< c'',
\end{array}
\end{equation}
and 
\begin{equation}
\sup_{\tau,z^{\tau}_s}{\Big (}{\big |}\int_{y\in \Omega}w_i(t_{in},y)G_{\nu'}(\tau,z^{\tau}_s-y)dy{\big |}{\Big )}<c'''
\end{equation}
for some finite constants $c'',c'''>0$ by similar considerations as in the case of the Leray projection term. Finally, we consider
\begin{equation}\label{boundw}
\sup_{\tau\uparrow \infty,z^{\tau}_s}{\big |}\int_{t_{in}}^{\tau}\int_{\partial^S Z^{(t_s,x_s)}_{t_1}}w^{\partial^Z}_i(s,y)G_{\nu'}(\tau,z^{\tau}_s;s,y)dyds{\big |}{\Big )}.
\end{equation}
Here recall that $\partial^S Z^{(t_s,x_s)}_{t_1}$ denotes the spatial boundary of the cylinder $Z^{(t_s,x_s)}_{t_1}$ with spatial basis $\Omega$. Recall that we may assume that $0=x_s=z^{\tau}_s$ for all $\tau\in (0,\infty)$. For all points $(\tau,z)$ on the boundary we have
\begin{equation}\label{boundw2}
\begin{array}{ll}
w^{\partial^Z}_i(\tau,z)=-2w_i{\big |}_{\partial^SZ^{(t_s,x_s)}_{t_1}}(\tau,z)+2\int_{\Omega}w_i(t_{in},y)G_{\nu'}(\tau,z;t_{in},y)dy\\
\\
-2(N^i\ast G_{\nu})(\tau,z)+\sum_{k=1}^{\infty}\int_0^{\tau}\int_{\Omega}{\Big (}-2w_i{\big |}_{\partial^S Z^{(t_s,x_s)}_{t_1}}(\sigma,\xi)+\\
\\
2\int_{\Omega}w_i(t_{in},y)G_{\nu'}(\sigma,\xi;t_{in},y)dy-2(N^i\ast G_{\nu'})(\sigma,\xi){\Big )} G^{k}_{\nu'}(\tau,z,\sigma,\xi)d\xi d\sigma,
\end{array}
\end{equation}
where we note that
\begin{equation}\label{vwbd}
G^{k}_{\nu'}=G^{\mu,k}_{\nu},~\mbox{and}~v_i{\big |}_{S_K}(t,x)=w_i(\tau,z){\big |}_{\partial^S Z^{(t_s,x_s)}_{t_1}}.
\end{equation}
Here recall that $v_i{\big |}_{S_K}$ denotes the restriction of the velocity component to the spatial boundary $S_K$ of the cone $K^{(t_s,x_s)}_{j,t_1}$.
According to Hopf's result we have $v_i\in L^{\infty}\left([t_1,t_s],L^2\left({\mathbb T}^n\right)\right)   $ , such that for all $(t,x)\in K^{(t_s,x_s)}_{j,t_1}$ we have
\begin{equation}
{\Big |}v_i{\big |}_{S_K}(t,x){\Big |}\leq \frac{c}{|x-x_s|^{\lambda}}~\mbox{a.s.},
\end{equation}
for some  $0\leq \lambda < 1.5$ and some finite constant $c>0$. Shifting spatial coordinates if necessary, we may assume $x_s=0$. We may write the boundary of the cone $K^{(t_s,x_s)}_{j,t_1}$
\begin{equation}
S_K:=\left\lbrace(t,x)|t\in (t_1,t_s)~\&~|x-x_s|= t_s-t\right\rbrace ,
\end{equation}
with $t_1\in U_j$. This boundary may be written in polar coordinates in order to obtain a simple description of the spatial boundary of the corresponding cylinder. For $r_0:=t_s-t_1$ and $x_s=0$ and $\rho=1$ we have $|x-x_s|=|x|=(t_s-t)^{\rho}|z|=(t_s-t)|z|$ such that the boundary of the corresponding cylinder  is described by
\begin{equation}
\partial^S Z^{(t_s,x_s)}_{t_1}:=\left\lbrace (\tau,z)| |z|=r_0~\&~\tau\geq t_{in}\right\rbrace. 
\end{equation}
We have $x=(t_s-t)^{\rho}z$ with $\rho=1$, and 
\begin{equation}\label{gg}
\begin{array}{ll}
{\big |}G_{\nu'}(\sigma,y){\big |}_{\partial^S Z^{(t_s,x_s)}_{t_1}}
={\Big |}\frac{1}{\sqrt{4\pi \nu' \sigma}^n}
\exp\left(-\frac{|y|^2}{4\nu' \sigma}\right) {\Big |}_{\partial^S Z^{(t_s,x_s)}_{t_1}}\\
\\
={\Big |}\frac{1}{\sqrt{4\pi \nu' \sigma}^n}
\exp\left(-\frac{|r_0|^2}{4\nu' \sigma}\right) {\Big |}.
\end{array}
\end{equation}
Hence, using (\ref{vwbd}), and (\ref{gg}), with $n=3$ and $x_s=0$ we have 
\begin{equation}
\begin{array}{ll}
\sup_{\tau>t_{in}}{\Big |}w_i(\tau-\sigma,0-y){\big |}_{\partial^S Z^{(t_s,x_s)}_{t_1}}\ast G_{\nu}(\sigma,y){\Big |}\\
\\
\leq
 \int_{\partial^SZ^{(t_s,x_s)}_{t_1}}\frac{c}{|r_0|^{\lambda}(t_s-t(\sigma))^{\lambda}}
 {\Big |}\frac{1}{\sqrt{4\pi \nu \sigma}^n}
\exp\left(-\frac{|r_0|^2}{4\nu' \sigma}\right) {\Big |}dy\leq c'<\infty
\end{array}
\end{equation}
for some finite $c,c^*,c'$ and where we may choose $\delta \in (0.5,1)$. The higher order boundary terms can be estimated similarly.
In the case $v_i\in H^{1-\epsilon},~1\leq i\leq n$ for small $\epsilon >0$ we have observed that Lemma \ref{pressure} still holds. Furthermore, all estimates go through straightforwardly. Finally, note that  $v_i(t_1,.)\in C^{\infty}\left({\mathbb T}^n\right)$ such that the upper bounds can be constructed independently of $x_s$, i.e., the spatial component of a possible singularity $(t_s,x_s)$ and  the related cone $K^{(t_s,x_s)}_{t_1}$.  

\section{Proof of theorem \ref{thmsing2}}
First we describe the proof plan. The result that there exists a left continuous extension of a Hopf-Leray solution can be sharpened by consideration of spatial derivatives using spatial symmetry of first order spatial derivatives of the Gaussian together with local Lipschitz continuity of the Leray projection term. This gives the estimates for the crucial Leray projection term convoluted with the first order derivative of the Gaussian. The convoluted initial value term and the convoluted Burgers term have similar estimates a fortiori. In a second step we consider upper bound estimates for spatial derivatives of the boundary  terms.
This argument (given below in detail) implies that for any possible time $t_s$ of a singularity $(t_s,x_s)\in S$ (recall that $S$ is the set of possible singularities predicted by the CKN-theory) we have (for some finite constant $C$)
\begin{equation}
\sup_{t\in [t_1,t_s]}{\big |}v_i(t,.){\big |}_{H^2\cap C^2}\leq C<\infty,
\end{equation}
where $[t_1,t_s)\subset U_j$,and $U_j$ is an open interval where a given Leray Hopf solution has full regularity. Furthermore, within a fixed time interval $[0,T]$ for some arbitrary time horizon $T>0$ the upper bound constants used do not depend on the specific location of singularities at $(t_s,x_s)$, i.e., for any sequence $(t^k_s)$ such that $(t^k_s,x^k_s)\in S$, $T^k_s\downarrow t_s$ as $k\uparrow \infty$, the  finite upper bound constants $C_{t^k_s}$ with ${\big |}v_i(t^k_s,.){\big |}_{H^2\cap C^2}\leq C_{t^k_s}$ have a common upper bound such that $\sup_{k\uparrow \infty}C_{t^k_s}\leq C<\infty.$ 
 
First we prove the existence of a regular spatial left-continuous extension. The latter task is obtained by proving a regular left-continuous extension for the comparison function $w_i,~1\leq i\leq n$, together with a proof of regularity transfer from the function $w_i,~1\leq i\leq n$ to $v_i,~1\leq i\leq n$.

We first consider this regularity transfer from $w_i,~1\leq i\leq  n$ to $v_i,~1\leq i\leq n$ assuming full regularity of the former function and Lipschitz continuity of the gradient of transformed pressure function $p^w_i,~1\leq i\leq n$, i.e., Lipschitz continuity of the transformed Leray projection term.  
This regularity transfer is not trivial (cf. the property of the transformation in (\ref{deralpha}) above). Note that for all $1\leq j\leq n$ we have
\begin{equation}\label{deralpha*}
{\big |}v_{i,j}(t,.){\big |}\leq {\big |}w_{i,j}(\tau,.){\big |}\frac{1}{(t_s-t)^{\rho}},
\end{equation} 
where we again choose $\rho=1$ in the following for simplicity, i.e., in order to have solution representations in term of convolutions with standard Gaussians at hand. For the regularity transfer the Leray projection term is crucial, and we consider this term next. Since $\tau=\frac{t}{t_s-t}$, or $t=t_s\frac{\tau}{1+\tau}$, we have
\begin{equation}\label{piw}
p_{,i}=p^w_{,i}\frac{dz_i}{dx_i}=p^w_{,i}\frac{1}{(t_s-t)}=p^w_{,i}\frac{\tau}{t}=p^w_i(1+\tau)\frac{1}{t_s}.
\end{equation}
We shall observe below that ${\big |}p^w_{,i}(\tau,.){\big |}\leq \frac{C}{1+\tau}$ (even a stronger decay holds).
Anyway, even by the first two relations in (\ref{piw}) we have that regularity of $w_i,~1\leq i\leq n$ and (\ref{condw}) implies that 
\begin{equation}
{\Bigg |}L^i_{{\mathbb T}^n} 
\left( \sum_{j,m=1}^n\left( v_{m,j} v_{j,m} \right) \right)(s,x){\Bigg |}\leq \frac{C}{(t_s-t)^{\rho}},
\end{equation}
which is not integrable for $\rho\geq 1$. Nevertheless, we consider $\rho=1$ such that we have the Gaussian $G^{\mu}_{\nu}=G^1_{\nu}$ in (\ref{wrep}). Note that for $\rho=1$ we have $G^{1}_{\nu}=G_{\nu'}$ with $\nu'=\frac{\nu}{t_s}$, where we use the assumption that $t_s>0$. The function $v_i,~1\leq i\leq n$ is defined on the whole torus, say on $[-0.5,0.5]^n$ with periodic boundary conditions, but we can treat it formally as an initial-boundary value problem with artificial boundaries
\begin{equation}
\partial^S\left( [t_1,t_s]\times {\mathbb T}^n\right):=
\left\lbrace (t,x)|t\in [t_1,t_s]~\&~x_i=-0.5\mbox{ or }x_i=0.5,~ 1\leq i\leq n\right\rbrace.
\end{equation}
Let us explain why the Leray projection term is crucial for the regularity transfer from the function $w_i,~1\leq i\leq n$. 
For the first order spatial derivatives we have the representation 
\begin{equation}\label{wrepderv}
\begin{array}{ll}
 v_{i,j}(t_s,x)=\int_{\left\lbrace y|(t_1,y)\in Z^{(t_s,x_s)}_{t_1}\right\rbrace }v_i(t_1,y)G_{\nu,j}(t_s-t_1,x-y)dy\\
\\
-\int_{[t_1,t_s]\times {\mathbb T}^n}\left( \sum_{j=1}^n ( v_jv_{i,j}\right)(s,y)G_{\nu,j}(t_s-s,x-y)dyds\\
\\ +\int_{[t_1,t_s]\times {\mathbb T}^n}L_{{\mathbb T}^n} 
\left( \sum_{l,m=1}^n\left( v_{m,l} v_{l,m} \right) \right)(s,y)G_{\nu,j}(t_s-s,x-y)dyds\\
\\
+\int_{t_1}^{t_s}\int_{\partial^S \left( [t_1,t_s]\times {\mathbb T}^n\right) }v^{\partial^T}_i(s,y)G_{\nu,j}(\tau,x;s,y)dyds.
\end{array}
\end{equation}
Here, the boundary term $v^{\partial^T}_i(s,y)$ is given by an analogous formula as $w^{\partial^Z}_i$ before.
Furthermore, on the right side of (\ref{wrepderv}) concerning the leading terms only the Burgers term and the Leray projection term involve first order derivatives of the velocity components $v_{i},¸1\leq i\leq n$ such that we have to deal with the regularity loss expressed in (\ref{deralpha*}) when we pass form $w_{i,j}$ to $v_{i,j}$. This is different for the other terms, where we may use $v_i(t,x)=w_i(\tau,z)$, and this is true for the higher order terms of the expansion of  $v^{\partial^T}_i$ as well, of course. The regularity for the latter higher boundary terms follows form the regularity of the boundary terms for the transformed functions  $w^{\partial^Z}_i$ straightforwardly. Note here that the nonlinear terms (the terms abbreviated by $N^i$ in the representation for $w^{\partial^Z}_i$ in (\ref{boundw}) and (\ref{boundw2})) are convoluted twice, and can estimated straightforwardly for $\mu=1$ if a suitable estimate for the Leray projection term is at hand.
Hence, the Leray projection term is indeed crucial. First note that the assumption of a full regularity of the comparison functions $w_i$ on a time interval $[t_1,t_s]$ implies that we have spatial Lipschitz continuity of the Leray projection term, i.e., we  obtain this spatial Lipschitz continuity from spatial Lipschitz continuity of $p^w_{,i},~1\leq i\leq n$, cf. below. This means that for all $x$ we have the relation
\begin{equation}
{\big |}L^i_{{\mathbb T}^n} 
\left( \sum_{j,m=1}^n\left( v_{m,j} v_{j,m} \right) \right)(s,x-y)-L^i_{{\mathbb T}^n} 
\left( \sum_{j,m=1}^n\left( v_{m,j} v_{j,m} \right) \right)(s,x-y'){\big |}\leq l|y-y|',
\end{equation} 
for a Lipschitz constant which is independent of $x$, and  for all $s\in [t_1,t_s]$ and $1\leq i\leq n$ (inherited from spatial Lipschitz continuity of the Leray projection term $L^i_{{\mathbb T}^n} \left( \sum_{j,m=1}^n\left( w_{m,j} w_{j,m} \right) \right)(s,x),~1\leq i\leq n$ proved in detail below).
We use this Lipschitz continuity together with the symmetry of the first order derivative of the Gaussian. In this context for $y=(y_1,\cdots,y_n)$ let
\begin{equation}
y^{-j}=(y^{-j}_1,\cdots,y^{-j}_n),~y^{-j}_j=-y_j,~y^{-j}_k=y_k~\mbox{for $k\neq j$.}
\end{equation}
We get (for $n=3$)
\begin{equation}\label{lipestest}
\begin{array}{ll}
\int_{t_1}^{t_s}\int_{[-0.5,0.5]^3}{\Big |}L^i_{{\mathbb T}^n} 
\left( \sum_{j,m=1}^n\left( v_{m,j} v_{j,m} \right) \right)(t_s-s,.-y) G_{\nu,j}(s,y){\Big |}dyds\\
\\
\leq \int_{t_1}^{t_s}\int_{[-0.5,0.5]^3,y_j\geq 0}{\Bigg |}{\Big (}L^i_{{\mathbb T}^n} 
\left( \sum_{j,m=1}^n\left( v_{m,j} v_{j,m} \right) \right)(t_s-s,.-y)\
\\
-L^i_{{\mathbb T}^n} 
\left( \sum_{j,m=1}^n\left( v_{m,j} v_{j,m} \right) \right)(t_s-s,x-y^j){\Big )}\frac{y_j}{2(t_s-s)\sqrt{4\pi \nu s}^n}\exp\left(-\frac{|y|^2}{4\nu (t_s-s)} \right){\Bigg |}dyds\\
\\
\leq \int_{t_1}^{t_s}\int_{[-0.5,0.5]^3,y_j\geq 0}{\Bigg |}\frac{C}{(t_s-t)^{\mu}}\frac{l|y|^2}{(t_s-s)\sqrt{4\pi \nu s}^n}\exp\left(-\frac{|y|^2}{4\nu (t_s-s)} \right){\Bigg |}dyds
\\
\\
\leq \frac{C^*}{(t_s-t)^{\delta}}, \mbox{ for $\delta \in (0,1)$,}
\end{array}
\end{equation}
and for some finite constants $C,C^*$, and where we use the standard pointwise estimate
\begin{equation}
\begin{array}{ll}
{\big |}G^1_{\nu,j}(t-s,y){\big |}\sim{\big |}\frac{|y|}{t_s-s}G^1_{\nu}(t-s,y){\big |} 
={\Big |}\frac{|y|}{(t_s-s)\sqrt{4\pi \nu (t-s)}^n}\exp\left(-\frac{|y|^2}{4\nu (t_s-s)} \right){\Big |}\\
\\
\leq \frac{c}{(4\pi\nu (t_s-s))^{\delta}}|y|\left( |y|^2\right)^{\delta -{n}/2-1} \left( \frac{|y|^2}{4\pi\nu (t_s-s)}\right)^{n/2+1-\delta} \exp\left(-\frac{|y|^2}{4\nu (t_s-s)} \right)\\
\\
\leq \frac{C}{(4\pi\nu (t_s-s))^{\delta}|y|^{n+1-2\delta}}.
\end{array}
\end{equation}
Note that the latter upper bound is only integrable for $\delta \in (0.5,1)$, and it is the spatial Lipschitz continuity of the Leray projection term which allows to get the upper bound in (\ref{lipestest}) with $\delta \in (0,1)$. It follows that for all $t\in [t_1,t_s]$
\begin{equation}
\int_{t_1}^{(.)}\int_{[-0.5,0.5]^3}L^i_{{\mathbb T}^n} 
\left( \sum_{j,m=1}^n\left( v_{m,j} v_{j,m} \right)(.-s,.-y) \right) G_{\nu,j}(s,y)dyds\in L^p
\end{equation}
for all $p>0$ (note that have passed the barrier $p=\frac{1}{4}$). Iteration of this argument (and application to spatial derivatives) leads to full regularity transfer.
Hence it remains to prove that we have a)  full regularity of the comparison function $w_i,¸1\leq i\leq n$, and b) that spatial Lipschitz continuity transfers from $w_i, 1\leq i\leq n$ to $v_i,¸1\leq i\leq n$.
For the first order derivatives $w_{i,j}$ we have a the classical representation
\begin{equation}\label{wrepder}
\begin{array}{ll}
 w_{i,j}(\tau,z)=\int_{\left\lbrace y|(t_1,y)\in Z^{(t_s,x_s)}_{t_1}\right\rbrace }w_i(t_{in},y)G^{\mu}_{\nu,j}(\tau,z-y)dy\\
\\
-\int_{Z^{(t_s,x_s)}_{t_1,\tau}}\left( \sum_{j=1}^n (\mu_1 w_j+z_j)w_{i,j}\right)(s,y)G^{\mu}_{\nu,j}(\tau-s,z-y)dyds\\
\\ +\int_{Z^{(t_s,x_s)}_{t_1,\tau}}\mu_1(s)L^i_{{\mathbb T}^n} 
\left( \sum_{l,m=1}^n\left( w_{m,l} w_{l,m} \right) \right)(s,y)G^{\mu}_{\nu,j}(\tau-s,z-y)dyds\\
\\
+\int_{t_{in}}^{\tau}\int_{\partial^S Z^{(t_s,x_s)}_{t_1,\tau}}w^{\partial^Z}_i(s,y)G^\mu_{\nu,j}(\tau,z;s,y)dyds,
\end{array}
\end{equation}
where $Z^{(t_s,x_s)}_{t_1,\tau}$ denotes the cylinder cut off at $\tau>t_i$.
Hence, for all $1\leq j\leq n$
\begin{equation}\label{wrep2der}
\begin{array}{ll}
\sup_{\tau\uparrow \infty,z^{\tau}_s} {\big |}w_{i,j}(\tau,z^{\tau}_s){\big |}\\
\\
 \leq \sup_{(\tau,x)\in Z^{(t_s,x_s)}_{t_1}}{\Big (}{\big |}\int_{\left\lbrace y|(t_1,y)\in Z^{(t_s,x_s)}_{t_1}\right\rbrace }w_i(t_{in},y)G^{\mu}_{\nu,j}(\tau-t_{in},z-y)dy{\big |}\\
\\
+{\big |}\int_{Z^{(t_s,x_s)}_{t_1}}\left( \sum_{j=1}^n (\mu_1 w_j+z_j)w_{i,j}\right)(s,y)G^{\mu}_{\nu,j}(\tau-s,z-y)dyds{\big |}\\
\\ 
+{\big |}\int_{Z^{(t_s,x_s)}_{t_1}}\mu_1(s)L^i_{{\mathbb T}^n} 
\left( \sum_{l,m=1}^n\left( w_{m,l} w_{l,m} \right) \right)(s,y)G^{\mu}_{\nu,j}(\tau-s,z-y)dyds{\big |}\\
\\
+{\big |}\int_{t_{in}}^{\tau}\int_{\partial^S Z^{(t_s,x_s)}_{t_1}}w^{\partial^Z}_i(s,y)G^\mu_{\nu ,j}(\tau,z^{\tau}_s;s,y)dyds{\big |}{\Big )}.
\end{array}
\end{equation}
We estimate the Leray projection term and remark that the the boundary term can be estimated using the ideas of the previous section. Similar estimates hold also for the Burgers term and the initial value term a fortiori. Consider the cone $K^{(t_s,x_s)}_{t_1}$ associated to a possible singularity $(t_s,x_s)\in S$ and consider a Leray-Hopf solution $v_i,~1\leq i\leq n$ on this cone. If $v_i\in H^1$ then 
\begin{equation}
{\Bigg |}L^i_{{\mathbb T}^n} 
\left( \sum_{j,m=1}^n\left( v_{m,j} v_{j,m} \right) \right)(t,x){\Bigg |}\leq \frac{C}{(t_s-t)^{\delta}|x_s-x|^{\frac{3}{2}-\epsilon}},
\end{equation}
for some finite constant $C>0$ and some $\delta \in [0,0.5)$ and small $\epsilon>0$. If $v_i\in H^{1-\epsilon'}$ for some small $\epsilon'$, then 
\begin{equation}
{\Bigg |}L^i_{{\mathbb T}^n} 
\left( \sum_{j,m=1}^n\left( v_{m,j} v_{j,m} \right) \right)(t,x){\Bigg |}\leq \frac{C}{(t_s-t)^{\delta}|x_s-x|^{\frac{3}{2}+\epsilon}},
\end{equation}
for some finite constant $C>0$ and some $\delta \in [0,0.5+\epsilon]$ and small $\epsilon>0$. We estimate the Leray projection term of the transformed equation in the latter case. Shifting spatial coordinates we may assume that $x_s=0$ and have 
\begin{equation}
{\Bigg |}L^i_{{\mathbb T}^n} 
\left( \sum_{j,m=1}^n\left( v_{m,j} v_{j,m} \right) \right)(t,x){\Bigg |}\leq \frac{C}{(t_s-t)^{\delta}|x|^{\frac{3}{2}+\epsilon}}.
\end{equation}
The left side denotes the first spatial derivative of the pressure with respect to the argument $x_i$. Recall that (with $\mu=1$)
\begin{equation}
p_{,i}=p^w_{,i}\frac{1}{(t_s-t)},~\mbox{or}~p^w_{,i}=(t_s-t)p_{,i}.
\end{equation}
Recall that for the first order spatial derivatives of the Gaussian we have the standard estimate
\begin{equation}
{\big |}G_{\nu,j}(\sigma,y){\big |}\leq \frac{c}{|\sigma|^{\delta}|y|^{n+1-2\delta}}.
\end{equation}
Note again that, alternatively, since $\sigma\geq t_{in}$, we have a simple estimate
\begin{equation}
{\big |}G^1_{\nu,j}(\sigma,y){\big |}\lesssim \frac{1}{\sqrt{\sigma}^3|y|},
\end{equation}
where we may use $\frac{|y|^2}{\sigma}G^1_{\nu}(\sigma,y)=\frac{|y|^2}{\sigma}G^1_{2\nu}(\sigma,y)G^1_{2\nu}(\sigma,y)\leq C{\big |}G^1_{2\nu}(\sigma,y){\big |}$ for some finite constant $C>0$. Hence even stronger estimates than the following hold.
Recall that Leray projection term is the convolution of the latter two terms times $\mu_1=(t_s-t)/t_s$. 
A similar reasoning as in the last section leads (in case $n=3$) to the upper bound
\begin{equation}
\begin{array}{ll}
\sup_{\tau\uparrow \infty}{\big |}\int_{t_1}^{\tau}\int_{|y|\leq 1}\mu_1(t(\sigma))L^i_{{\mathbb T}^n} 
\left( \sum_{j,m=1}^n\left( w_{m,j} w_{j,m} \right) \right)(\sigma,y)\times\\
\\
\times G^1_{\nu,j}(\tau-\sigma,z-y)dyd\sigma{\big |}\\
\\
\leq \sup_{\tau\uparrow \infty}{\big |}\int_{t_1}^{\tau}\int_{|y|\leq 1}
\frac{c}{(1+\sigma)^{2+\delta_1}
(|y|^{1.5+\epsilon})}\frac{c}{|\tau-\sigma|^{\delta}|z-y|^{4-2\delta}}dyd\sigma{\big |}\\
\\
\leq \sup_{\tau\uparrow \infty}{\big |}\int_{t_1}^{\tau}
\frac{c}{(1+\tau)^{1+\delta_1+\delta}
(|z|^{2.5+\epsilon-2\delta})}d\sigma{\big |}<\frac{c'}{|z|^{0.5+\epsilon'}}.
\end{array}
\end{equation}
for some appropriate constant $c_0$ and small $\epsilon'>0$. This means that we have for dimension $n\geq 2$
\begin{equation}
\begin{array}{ll}
\sup_{\tau\uparrow \infty}{\big |}\int_{t_1}^{\tau}\int_{|y|\leq 1}\mu_1(t(\sigma))L^i_{{\mathbb T}^n} 
\left( \sum_{j,m=1}^n\left( w_{m,j} w_{j,m} \right) \right)(\sigma,y)\times\\
\\
\times G^1_{\nu,j}(\tau-\sigma,.-y)dyd\sigma{\big |}\in L^{3-\epsilon^*}
\end{array}
\end{equation}
for small $\epsilon^*>0$. As the Burgers term and the initial value term have stronger regularity, and assuming that the boundary term has the same regularity at least (which is indeed true and checked below) we conclude that
\begin{equation}
w^{(0)}_{i,j}(\tau,.):=w_{i,j}(\tau,.)\in L^3~\mbox{for $\tau\geq t_{in}$.}
\end{equation}
We may use this information as an input in the estimate in (\ref{wrep2der}) above, i.e., we may set up an iterative scheme for $k\geq 1$ of the form
\begin{equation}\label{wrep2der}
\begin{array}{ll}
\sup_{\tau\geq t_{in}} {\big |}w^{(k)}_{i,j}(\tau,.){\big |}\\
\\
 \leq \sup_{\tau\geq t_{in}}{\Big (}{\big |}\int_{\left\lbrace y|(t_1,y)\in Z^{(t_s,x_s)}_{t_1}\right\rbrace }w^{(k-1)}_i(t_{in},y)G^{\mu}_{\nu,j}(\tau,z-y)dy{\big |}\\
\\
+{\big |}\int_{Z^{(t_s,x_s)}_{t_1}}\left( \sum_{j=1}^n (\mu_1 w^{(k-1)}_j+x_j)w^{k-1)}_{i,j}\right)(s,y)G^{\mu}_{\nu,j}(\tau-s,,-y)dyds{\big |}\\
\\ 
+{\big |}\int_{Z^{(t_s,x_s)}_{t_1}}\mu_1(s)L^i_{{\mathbb T}^n} 
\left( \sum_{l,m=1}^n\left( w^{(k-1)}_{m,l} w^{(k-1)}_{l,m} \right) \right)(s,y)G^{\mu}_{\nu,j}(\tau-s,.-y)dyds{\big |}\\
\\
+{\big |}\int_{t_{in}}^{\tau}\int_{\partial^S Z^{(t_s,x_s)}_{t_1}}w^{\partial^Z,(k-1)}_i(s,y)G^\mu_{\nu ,j}(\tau,z;s,y)dyds{\big |}{\Big )},
\end{array}
\end{equation}
where for the boundary term we have
\begin{equation}
w^{\partial^Z,(0)}_i=w^{\partial^Z}_i,
\end{equation}
and such that $w^{\partial^Z,(k)}_i$ is defined recursively and analogously. The functions which define the boundary terms inherit regularity $w^{(k-1)}_i$ known from the previous step and it is easy to check that the upper bounds which hold for the Leray projection term are a fortiori upper bounds for the boundary terms (you may even use the fact that we are on the boundary of a cylinder where the basis is a ball of positive radius around $x_s=0$).
We shall observe that gain spatial regularity of one order at least at each iteration step. Some embedding results may be used here (cf. \cite{N}). We have
\begin{equation}
g\in H^{s,p}~\mbox{ iff }~\Lambda^sg\in L^p,
\end{equation}
where
\begin{equation}
{\cal F}\left( \Lambda^sg\right) =(1+\xi^2)^{s/2}{\cal F}\left(g \right)(\xi),
\end{equation}
and
\begin{equation}
H^{r,q}\subset H^{s,p}~\mbox{ iff }~\frac{1}{q}-\frac{1}{p}=\frac{r-s}{n}.
\end{equation}
As usual and as before we drop the second superscript in case of $L^2$-theory. 
For $n=3$ we start with $w_{i,j}(\tau,.)\in L^3$ and get $w_{i,j}(\tau,.)\in H^{0.5}$.
The functions $w_i,~1 \leq i\leq n$ are defined on a cylinder (not on a torus), but we can adopt dual Sobolev spaces obviously (the formal definition may be supplemented by the reader). Recall that
\begin{equation}
w_i(\tau,.)\in H^s~\mbox{ iff }~\sum_{\alpha \in{\mathbb Z}^n}|w_{i\alpha}(\tau)|^2\left\langle \alpha\right\rangle^{2s}< \infty, 
\end{equation}
If $w^{(0)}_{i,j\alpha},~\alpha\in {\mathbb Z}^n$ denote the $\alpha$-modes of $w^{(0)}_{i,j}(\tau,.)$ then 
\begin{equation}
{\big |}w^{(0)}_{i,j\alpha}(\tau,.){\big |}\leq \frac{c}{\left\langle \alpha\right\rangle^{2+\epsilon}},~\mbox{ and }~{\big |}w^{(0)}_{i,\alpha}(\tau,.){\big |}\leq \frac{c}{\left\langle \alpha\right\rangle^{3+\epsilon}}
\end{equation}
for some small $\epsilon >0$, where we recall
\begin{equation}
 \left\langle \alpha\right\rangle :=\left(1+|\alpha|^2 \right)^{1/2}. 
\end{equation}
Furthermore, the Poisson elimination equation for the pressure is the same for $w_i$ as in original coordinates, such that we have 
\begin{equation}\label{navode200first222}
\begin{array}{ll}
p^{w,(0)}_{,i}(\tau,z)=p^w_{,i}(\tau,z)=\sum_{\alpha\in {\mathbb Z}^n}p^w_{\alpha ,i}\exp\left( 2\pi i\alpha z\right)
\\
\\
=\sum_{\alpha\in {\mathbb Z}^n}2\pi i\alpha_i1_{\left\lbrace \alpha\neq 0\right\rbrace}\frac{\sum_{j,k=1}^n\sum_{\gamma\in {\mathbb Z}^n}4\pi^2 \gamma_j(\alpha_k-\gamma_k)w_{j\gamma}w_{k(\alpha-\gamma)}}{\sum_{i=1}^n4\pi^2\alpha_i^2}\exp\left( 2\pi i\alpha z\right).
\end{array} 
\end{equation}
It follows that
\begin{equation}
p^{w,(0)}_{,i}(\tau,.)=p^w_{,i}(\tau,.)\in H^{1-\epsilon}
\end{equation}
for small $\epsilon >0$. Iterating this argument it follows that for all $k\geq 1$
\begin{equation}
w^{(k)}_{i,j}(\tau,.):=w_{i,j}(\tau,.)\in H^{k-\epsilon}~\mbox{for $\tau\geq t_{in}$.}
\end{equation}
for small $\epsilon >0$. Hence we have proved a). For b), i.e., the decay of $p^w_{,i}$ for large $\tau$ of at least order $1$ we start with
\begin{equation}
p^w_{,i}=L^i_{{\mathbb T}^n}\left(\sum_{l,m}w_{l,m}w_{m,l} \right),
\end{equation}
and go back to the representation of the first derivatives of $w_i$ in (\ref{wrepder}).
We use again an 'iterative scheme' based on an equivalent representation. Here iterative scheme means that we have indeed a fixed point scheme but iterate regularity considerations with respect to this fixed point according to the scheme. We have
\begin{equation}\label{wrepder2}
\begin{array}{ll}
w^{(k)}_{i,j}(\tau,z)= w_{i,j}(\tau,z)=\int_{\left\lbrace y|(t_{in},y)\in Z^{(t_s,x_s)}_{t_1,\tau}\right\rbrace }w^{(k-1)}_i(t_{in},z-y)G^{\mu}_{\nu,j}(\tau-t_{in},y)dy\\
\\
-\int_{Z^{(t_s,x_s)}_{t_1,\tau}}\left( \sum_{j=1}^n (\mu_1 w^{(k-1)}_j+x_j)w^{(k-1)}_{i,j}\right)(\tau-s,z-y)G^{\mu}_{\nu,j}(s,y)dyds\\
\\ +\int_{Z^{(t_s,x_s)}_{t_1,\tau}}\mu_1(s)L^i_{{\mathbb T}^n} 
\left( \sum_{l,m=1}^n\left( w^{(k-1)}_{m,l} w^{(k-1)}_{l,m} \right) \right)(\tau-s,z-y)G^{\mu}_{\nu,j}(s,y)dyds\\
\\
+\int_{t_{in}}^{\tau}\int_{\partial^S Z^{(t_s,x_s)}_{t_1}}w^{\partial^Z,(k-1)}_i(\tau-s,z-y)G^\mu_{\nu,j}(s,y)dyds,
\end{array}
\end{equation}
where $w^{(k-1)}_{m,l}=w_{m,l}$. Consider again $\mu=1$. 
We observe a gain of regularity at each iteration step. As observed above for $\mu=1$ and $n=3$ we may us for $s\geq t_{in}$ the estimate ${\big |}G^{1}_{\nu,j}(s,y){\big |}\leq \frac{c}{\sqrt{s}^3|y|}$ for some $c>0$. All moduli of functions  $w^{(0)}_{m,l}=w_{m,l}$, $w^{(0)}_{m}=w_{m}$, and $w^{\partial^Z,(0)}_l=w^{\partial^Z}_l$ have a finite constant upper bound $C>0$ for all $1\leq l,m\leq n$, such that we get an upper bound for ${\big |}w^{(1)}_{m,l}{\big |}\leq \frac{C}{\sqrt{s}}$, ${\big |}w^{(1)}_{m}{\big |}\leq \frac{C}{\sqrt{s}}$, and ${\big |}w^{\partial^Z,(1)}_l{\big |}\leq \frac{C}{\sqrt{s}}$ for $s\geq t_{in}$ after one iteration step (note that the terms with coefficients $\mu_0\sim \frac{1}{1+\tau}\sim\mu_1$ have even a stronger decay by one order at this first iteration step. After the second iteration we have the desired decay with respect to $\tau$. This proves b). Finally we remark that the constants inherited from CKN-theory are all global in a given domain with arbitrary finite time interval $[0,T]$ with arbitrary finite time horizon $T>0$ and do not depend on $(t_s,x_s)\in S$. Hence, we have indeed a regular extension of the Leray-Hopf solution.  

\section{Conclusion}

The Cafarelli-Kohn-Nirenberg theory is one of the powerful tools of the last century, which allows to derive conclusions about existence, regularity, and singular behaviour for weak function spaces for a considerable class of fluid models. It has shown that the treatment of the Navier Stokes equation is hard from the perspective of weak function spaces. Conclusions can be obtained which go beyond statements of global regularity which assume more regular data. We have stated only a few in this paper, but the method outlined here my be used in order to investigate the asymptotic singular behavior near weak data as $t_s\downarrow 0$ etc..   In the appendix an alternative short argument is given which reduces the global regularity and existence  problem to a local time contraction results for $H^m\cap C^m$ data for $m\geq 2$.  We think that the incompressible Navier Stokes equation problem belongs to a huge class of evolution problems which have a global regular or global smooth solution branch. Many equations of this class may have singular solutions, but in case of the incompressible Navier Stokes equation a global regular solution branch $v\in C^0\left([0,T],H^m\cap C^m\right)$ for arbitrary $T>0$ is well-known to be unique, i.e., if $\tilde{v}_i,~1\leq i\leq n$ is another solution of the incompressible Navier Stokes equation, then we have
\begin{equation}
{\big |}\tilde{v}(t)-v(t){\big |}^2_{L^2}\leq {\big |}\tilde{v}(0)-v(0){\big |}^2_{L^2}
\exp\left(C\int_0^t\left( {\big |}v(s){\big |}^p_{L^4}+{\big |}v(s){\big |}^2_{L^4}\right)ds  \right) 
\end{equation}
where $C>0$ with $p=8$ in dimension $n=3$.
From the perspective of CKN theory after small time $t_0>0$ we can close the gap between
 $$v_i\in L^{\infty}\left([t_0,T],L^2\left({\mathbb T}^n\right)  \right)\cap L^2\left( [0,T],H^{1-\epsilon}\left({\mathbb T}^n\right) \right)\cap  L^{8/3}\left(\left[0,T\right],L^4\left({\mathbb T}^n\right)   \right) \mbox{small $\epsilon$}$$
 and
 $$v_i\in L^{\infty}\left([t_0,T],L^2\left({\mathbb T}^n\right)  \right)\cap L^2\left( [0,T],H^1\left({\mathbb T}^n\right) \right)\cap  L^{8}\left(\left[0,T\right],L^4\left({\mathbb T}^n\right)   \right),$$
and the so-called global regularity and existence problem follows then from local time contraction, if strong data, say $v_i(0,.)\in H^m\cap C^m$, $m\geq 2$ are assumed.

\section{Appendix 1: Comparison to another global regularity argument}

We consider the reduction of the global regular existence problem to a local time contraction result.
This argument is considered for the problem on the whole domain ${\mathbb R}^n$. It is a variation of arguments given elsewhere, and works even without viscosity damping estimates.   The argument can be reformulated for the $n$-torus.
Local time contraction on a short time interval $[t_0,t_0+\Delta]$ for data $v^{\nu}_i(t_0,.),~1\leq i\leq n,~t_0\geq 0$ with $v_i(t_0,.)\in H^m\cap C^m$ for $m\geq 2$ shows that there is a local- time representation of the velocity component functions $v^{\nu}_i,~ 1\leq i\leq n$ of the form

\begin{equation}\label{Navlerayscheme}
\begin{array}{ll}
 v^{\nu}_i=v^{\nu}_i(t_0,.)\ast_{sp}G_{\nu}
-\sum_{j=1}^n \left( v^{\nu}_j\frac{\partial v^{\nu}_i}{\partial x_j}\right) \ast G_{\nu}\\
\\+\left( \sum_{j,m=1}^n\int_{{\mathbb R}^n}\left( \frac{\partial}{\partial x_i}K_n(.-y)\right) \sum_{j,m=1}^n\left( \frac{\partial v^{\nu}_m}{\partial x_j}\frac{\partial v^{\nu}_j}{\partial x_m}\right) (.,y)dy\right) \ast G_{\nu}.
\end{array}
\end{equation}
Using the incompressibility condition
\begin{equation}\label{incomp}
\sum_{j=1}^n\frac{\partial v_j}{\partial x_j}=0,
\end{equation}
we may rewrite the Burgers term, where we have
\begin{equation}
\sum_{j=1}^n\frac{\partial ( v_iv_j)}{\partial x_j}=\sum_{j=1}^nv_j\frac{\partial v_i}{\partial x_j}+v_i\sum_{j=1}^n\frac{\partial v_j}{\partial x_j}= \sum_{j=1}^nv_j\frac{\partial v_i}{\partial x_j}.
\end{equation}
Hence the local representation in (\ref{Navlerayscheme}) may be rewritten in the form
\begin{equation}\label{Navlerayscheme2i}
\begin{array}{ll}
 v^{\nu}_i=v^{\nu}_i(t_0,.)\ast_{sp}G_{\nu}
-\sum_{j=1}^n \left( v^{\nu}_jv^{\nu}_i\right) \ast G_{\nu,j}\\
\\+\left( \int_{{\mathbb R}^n}\left( K_n(.-y)\right) \sum_{j,m=1}^n\left( \frac{\partial v^{\nu}_m}{\partial x_j}\frac{\partial v^{\nu}_j}{\partial x_m}\right) (.,y)dy\right) \ast G_{\nu,i},
\end{array}
\end{equation}
where all nonlinear terms are convolutions with a first order spatial derivative of the Gaussian $G_{\nu}$. We may use the Lipschitz continuity of the Leray projection term function for strong data, which we get close to data at $t_0\geq 0$ by local time contraction in regular space with respect to the norm $\sup_{t\in [t_0,t_0+\Delta]}{\big |}v^{\nu}_i(t,.){\big |}_{H^m\cap C^m}$ for $m\geq 2$. 
For the first order spatial derivatives of the Gaussian we compute
\begin{equation}
\begin{array}{ll}
{\big |}G_{\nu,i}(t,y){\big |} ={\Big |}\frac{-2y_i}{4\pi \nu t}\frac{1}{\sqrt{4\pi\nu t}^n}\exp\left(-\frac{|y|^2}{4\nu t} \right){\Big |}\\
\\
\leq \frac{1}{(4\pi\nu t)^{\delta}|y|}\left( \frac{|y|^2}{4\pi\nu }\right)^{\delta-n/2} \left( |y|^2\right)^{n/2+1-\delta} \exp\left(-\frac{|y|^2}{4\nu t} \right)
\end{array}
\end{equation}
Hence we have for $\delta \in (0,1)$ 
\begin{equation}
{\big |}G_{\nu ,i}(t,y){\big |}\leq \frac{C}{(4\pi \nu t)^{\delta}|y|^{n+1-2\delta}}, 
\end{equation}
where the constant $$C=\sup_{|z|>0}\left( z\right)^{n/2+1-\delta} \exp\left(-z^2\right) >0$$ is independent of $\nu >0$. Similarly we get
\begin{equation}
{\big |}G^r_{\nu ,i}(t,y){\big |}\leq \frac{C}{(4\pi \nu r^2 t)^{\delta}|y|^{n+1-2\delta}}, 
\end{equation}
for the scaled equation with transformation $v^{r,\nu}_i(t,y)=v^{\nu}_i(t,x)$ and $y=rx$. For $1\leq |\beta|\leq m$ we have 
\begin{equation}\label{Navlerayscheme2ir}
\begin{array}{ll}
 D^{\beta}_xv^{r,\nu}_i=D^{\beta}_xv^{r,\nu}_i(t_0,.)\ast_{sp}G^r_{\nu}
-r\sum_{j=1}^n D^{\beta}_{x}\left( v^{r,\nu}_jv^{r,\nu}_i\right) \ast G^r_{\nu,j}\\
\\+r\left( \int_{{\mathbb R}^n}\left( K_{n,i}(.-y)\right) \sum_{j,m=1}^nD^{\gamma}_x\left( \frac{\partial v^{r,\nu}_m}{\partial x_j}\frac{\partial v^{r,\nu}_j}{\partial x_m}\right) (.,y)dy\right) \ast G^r_{\nu,j},
\end{array}
\end{equation}
where for $|\beta|\geq 1$ we have $0\leq |\gamma|\leq m-1$, $\gamma_l+\delta_{lj}=\beta_l$ for all $1\leq l\leq n$, and where we used the convolution rule.
Now consider the comparison function $u^{r,t_0}_i,~1\leq i\leq n$ where for all $y=rx$
\begin{equation}\label{globtimei}
(1+t)u^{r,t_0}_i(s,y)=v^{r}_i(t,y)=v_i(t,x),~s=\frac{t-t_0}{\sqrt{1-(t-t_0)^2}}.
\end{equation}
We may consider this transformation for data $v^r_i(t_0,.)$ on a time interval $t-t_0=0.5$ which corresponds to a time interval of length $\left[0,\frac{1}{\sqrt{3}}\right]$ in terms of the transformed time $s$. 
Next by local time contraction results we have the local representations
\begin{equation}\label{Navlerayusubschemepreia0}
\begin{array}{ll}
u^{r,t_0}_i(s,x)=\int_{{\mathbb R}^n}u^{r,t_0}_i(0,y)G^{\mu,r}_{\nu}(s,x;0,y)dy\\
\\
-\int_{0}^{s}\int_{{\mathbb R}^n}\mu(\sigma)  u^{r,t_0}_{i}(\sigma,y)G^{\mu,r}_{\nu}(s,x;\sigma,y)dyd\sigma\\
\\
-\int_{0}^{s}\int_{{\mathbb R}^n}r\mu^{\tau,2}(\sigma)\sum_{j=1}^n \left( u^{r,t_0}_j u^{r,t_0}_i\right) (\sigma,y)G^{\mu,r}_{\nu,j}(s,x;\sigma,y)dyd\sigma\\
\\
+ \int_{0}^{s}\int_{{\mathbb R}^n} r\mu^{\tau,2}(\sigma)\sum_{j,r=1}^n\int_{{\mathbb R}^n}\left(K_n(z-y)\right)\times\\
\\
\times \sum_{j,l=1}^n\left( \frac{\partial u^{r,t_0}_l}{\partial x_j}\frac{\partial u^{r,t_0}_j}{\partial x_l}\right) (\sigma,y)G^{\mu,r}_{\nu,i}(s,x;\sigma,z)dydzd\sigma,
\end{array}
\end{equation}
and for the multivariate spatial derivatives of order $1\leq |\beta|\leq m$ we have 
\begin{equation}\label{Navlerayusubschemepreia}
\begin{array}{ll}
D^{\beta}_xu^{r,t_0}_i(s,x)=\int_{{\mathbb R}^n}D^{\beta}_xu^{r,t_0}_i(0,y)G^{\mu,r}_{\nu}(s,x;0,y)dy\\
\\
-\int_{0}^{s}\int_{{\mathbb R}^n}\mu(\sigma)  u^{r,t_0}_{i,\beta}(\sigma,y)G^{\mu,r}_{\nu}(s,x;\sigma,y)dyd\sigma\\
\\
-\int_{0}^{s}\int_{{\mathbb R}^n}r\mu^{\tau,2}(\sigma)\sum_{j=1}^n \left( u^{r,t_0}_j u^{r,t_0}_i\right)_{,\beta} (\sigma,y)G^{\mu,r}_{\nu,j}(s,x;\sigma,y)dyd\sigma\\
\\
+ \int_{0}^{s}\int_{{\mathbb R}^n} r\mu^{\tau,2}(\sigma)\sum_{j,r=1}^n\int_{{\mathbb R}^n}\left( K_n(z-y)\right)\times\\
\\
\times \sum_{j,l=1}^n\left( \frac{\partial u^{r,t_0}_l}{\partial x_j}\frac{\partial u^{r,t_0}_j}{\partial x_l}\right)_{,\gamma} (\sigma,y)G^{\mu,r}_{\nu,i}(s,x;\sigma,z)dydzd\sigma,
\end{array}
\end{equation}
where we use the integration of the Burgers term above and we have for $s\in \left[0,\frac{1}{\sqrt{3}} \right] $ and for $k\in \left\lbrace 1,2\right\rbrace $ we have
\begin{equation}\label{mu}
\begin{array}{ll}
\mu =\mu(s)=\frac{\sqrt{1-(t(s)-t_0)^2}^3}{1+t(s)}\geq \frac{3\sqrt{3}}{8(1+T)}=:\mu_0,\\
\\
~r\mu^{\tau, k}:=r(1+t(s))^k\mu=r\sqrt{1-(t(s)-t_0)^2}^3(1+t(s))\\
\\
\leq r(1+T).
\end{array}
\end{equation}
Here, $G^{\mu,r}_{\nu}$ is fundamental solution of the heat equation
\begin{equation}
G^{\mu,r}_{\nu,s}-\mu^1 \Delta G^{\mu,r}_{\nu}=0.
\end{equation}
Now the local solution representation with first order spatial derivatives of the Gaussian has a Levy expansion with leading term upper bound (needed on a compact domain) 
\begin{equation}\label{gmu}
{\big |}G^{\mu,r}_{\nu ,i}(s,y;\sigma,z){\big |}\leq \frac{C}{(\mu_0\nu r^2(s-\sigma))^{\delta}|y-z|^{n+1-2\delta}}, 
\end{equation}
where $C>0$ is a finite constant (dependent only on dimension).
Note that the term $G^{\mu,r}_{\nu ,i}(s,y;\sigma,z)$ is not integrable for $\delta\in \left(0, \frac{1}{2}\right)$ 
but convolutions $l\ast G^{\mu,r}_{\nu ,i}$ with a Lipschitz continuous function $y\rightarrow l_x(y)=l(x-y)$ are, i.e., for such a function with Lipschitz constant $l_0$ there is some finite constant $C>0$ (dependent only on dimension) such that on a ball $B$ of radius $\mu_0\nu r^2$ around the origin the increment over a time interval $[t_0,s]$ of such a term has the upper bound
\begin{equation}\label{uppbound}
\begin{array}{ll}
\int_{t_0}^s\sup_{B}{\big |}l(.-y)\ast G^{\mu,r}_{\nu ,i}(\sigma,y){\big |}\leq 
\int_{t_0}^s\int_B\frac{l_0C}{( \mu_0\nu r^2(\sigma-t_0)^{\delta}|z|^{n-2\delta}}dzd\sigma\\
\\
\leq l_0C(\mu_0\nu r^2)^{\delta} (s-t_0)^{1-\delta},
\end{array}
\end{equation}
where $l_0$ is a Lipschitz constant of $l_x$. In the case of data $v_i(t_0,.)\in H^m\cap C^m$ for $m\geq 2$ at time $t_0\geq 0$ a local time analysis over a msall intervalk $[t_0,t_0+\Delta_0]$ shows that  the Burgers term functional and the Leray projection term functional and their firsr order derivatives have a Lipschitz constant $l_0$§ which is independent of $x$.
\begin{rem}
For (\ref{uppbound}) we may use the observation 
\begin{equation}
\begin{array}{ll}
\int_{t_0}^s\sup_{{\mathbb R}^n}{\big |}l(x-y) G^{\mu,r}_{\nu ,i}(\sigma,y){\big |}\\
\\
=\int_{t_0}^s\sup_{{\mathbb R}^n}{\big |}l(x-y)\frac{2y_i}{4\nu \sigma} G^{\mu,r}_{\nu }(s,y){\big |}\\
\\
\leq \int_{t_0}^s\sup_{{\mathbb R}^n,y_i\geq 0}{\big |}|l_x(-y)-l_x(-y^{-,i})|\frac{2y_i}{4\nu \sigma} G^{\mu,r}_{\nu }(s,y){\big |}\\
\\
\leq \int_{t_0}^s\sup_{{\mathbb R}^n,y_i\geq 0}{\big |}l_0\frac{4y^2_i}{4\nu \sigma} G^{\mu,r}_{\nu }(s,y){\big |}
\end{array}
\end{equation}
where $y^{i,-}=(y^{i,-}_1,\cdots,y^{i,-}_n)$ with $y^{i,-}_j=y_j$ for $j\neq i$ and $y^{i,-}_i=-y_i$.
\end{rem}
For a small time interval $[t_0,s]$ the complementary integral 
\begin{equation}\label{integrand}
\int_{t_0}^s\int_{{\mathbb R}^n\setminus B}{\big |}l\ast G^{\mu,r}_{\nu ,i}(s,y){\big |}dyds
\end{equation}
 becomes relatively small, i.e.,for every $\epsilon >0$ and  $\Delta_0=(s-t_0)$ small enough we have
\begin{equation}\label{releps}
\begin{array}{ll}
{\Big |}\int_{t_0}^s\int_{{\mathbb R}^n\setminus B}{\big |}l\ast G^{\mu,r}_{\nu ,i}(s,y){\big |}dyds{\Big |}\leq \epsilon l_0C(\mu_0\nu r^2)^{\delta} (s-t_0)^{1-\delta}\\
\\
=\epsilon l_0C(\mu_0\nu r^2)^{\delta} \Delta_0^{1-\delta}.
\end{array}
\end{equation}
This holds because the exponent of the Gaussian becomes small (note that the radius of $B$ does not depend on time - the estimates are designed for the case $\nu >0$). Recall that $l$ is Lipschitz such that  that the integrand of (\ref{integrand}) has an upper bound
\begin{equation}
\begin{array}{ll}
{\Big |}l\ast G^{\mu,r}_{\nu ,i}(s-t_0,y;0,0){\Big |}\leq 
c\frac{1}{\sqrt{s}^n}\exp\left(-\lambda\frac{|y|^2}{4\mu_0\nu r^2 (s-t_0)} \right),
\end{array}
\end{equation}
where $|y|^2\geq \mu_0\nu r^2$ and $\lambda >0$ is a finite positive constant which depends on $\mu_0>0$.
Hence if 
\begin{equation}
r\lesssim \sqrt{\Delta_0}^{1-\epsilon_0},~\mbox{for  $\epsilon_0>0$ small}
\end{equation}
then for any $\epsilon >0$ the relations in (\ref{uppbound}) and in (\ref{releps}) holds  for a time step size $\Delta_0>0$ small enough. This implies that solution increment due to the the nonlinear terms have an upper bound which is proportional to $\Delta_0^{2(1-\epsilon_0)\delta +1-\delta}$ where $\alpha_0:=2(1-\epsilon_0)\delta +1-\delta>1$ for $\epsilon_0$ small enough and given $\delta \in (0,1)$. This means that the increment $\delta v_i:=v_i-v_i(t_0,.)\ast_{sp}G_{\nu}$ ($\ast_{sp}$ denoting convolution with respect to the spatial variables) has an upper bound
\begin{equation}
\max_{1\leq i\leq n}\sup_{\sigma\in [t_0,t_0+\Delta_0]}|\delta v_i(\sigma,.)|_{H^m\cap C^m}\leq \Delta^{\alpha_0}
\end{equation}
with some $\alpha_0>1$ on a small time interval $\Delta_0$, a growth which is offset by the potential damping term over the same time interval if $\Delta_0$ is small enough. 

Next local contraction with respect to a $H^2\cap C^2$-norm implies that we have Lipschitz continuity of the Leray data function and its first order spatial derivatives. More precislely 
\begin{lem}
Given data $u^{r,t_0}_j(t_0,.)$ with ${\Big |}u^{r,t_0}_j(t_0,.){\Big |}_{H^2\cap C^2}=C_2<\infty$ there exists a time step size $\Delta_0$ such that $u^{r,t_0}_j(s,.)\in H^2\cap C^2$ for $s\in [t_0,t_0+\Delta_0]$. Moreover, the function
\begin{equation}
\begin{array}{ll}
y\rightarrow -\sum_j u^{r,t_0}_j(s,.) u^{r,t_0}_{i,j} (s,y)\\
\\
+ \sum_{j,r=1}^n\int_{{\mathbb R}^n}\left(K_{n,i}(y-z)\right)u^{r,t_0}_{j,r}(s,z) u^{r,t_0}_{r,j} (s,z)dz
\end{array}
\end{equation}
is in $C^1\cap H^1$.
\end{lem}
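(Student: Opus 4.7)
The plan is to split the statement into two parts: \textbf{(i)} local-time persistence of $H^2\cap C^2$ regularity on a small interval $[t_0,t_0+\Delta_0]$, and \textbf{(ii)} the resulting $C^1\cap H^1$ regularity of the Burgers plus Leray-projection functional that appears as the forcing term in \eqref{Navlerayusubschemepreia0}. For (i), I would run a standard contraction argument on the Duhamel representation \eqref{Navlerayusubschemepreia0} in the Banach space $X_\Delta := C([t_0,t_0+\Delta], H^2\cap C^2)$ equipped with the sup-in-time norm $\sup_s|u(s,.)|_{H^2\cap C^2}$. The heat-semigroup term $u^{r,t_0}_i(0,\cdot)\ast G^{\mu,r}_\nu$ is bounded uniformly in time by the data norm $C_2$; the nonlinear Burgers map $(u,v)\mapsto\sum_j u_j v_{i,j}$ is bilinear on $H^2\cap C^2$ (use the algebra property of $H^2$ in $n\le 3$ and pointwise estimates for $C^2$), and the Leray projection term is the Riesz-type singular integral of a product of first derivatives, which is also bilinear and bounded on $H^2\cap C^2$ by classical Calder\'on--Zygmund theory (all its derivatives up to second order come with convolutions against first derivatives of $G^{\mu,r}_\nu$, for which the bound \eqref{gmu} and the upper-bound reasoning already used around \eqref{uppbound} apply). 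Choosing $\Delta_0$ so that the combined bilinear estimate times $\Delta_0^{\alpha_0-1}$ (with $\alpha_0>1$ as established just before the lemma) is strictly less than one on a closed ball around the heat-semigroup term, Banach fixed point delivers a unique $u^{r,t_0}\in X_{\Delta_0}$.

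For (ii), once $u^{r,t_0}_j(s,\cdot)\in H^2\cap C^2$ uniformly in $s\in[t_0,t_0+\Delta_0]$, I would treat the two summands separately. The Burgers part $-\sum_j u_j u_{i,j}$ is a product of an $H^2\cap C^2$ factor with an $H^1\cap C^1$ factor; by the algebra property of Sobolev spaces in dimension $n\le 3$ (since $H^2\hookrightarrow L^\infty$ and the derivatives distribute by Leibniz) this product lies in $H^1\cap C^1$ with norm controlled by $|u^{r,t_0}|_{H^2\cap C^2}^2$. The Leray projection part is the convolution of the scalar quantity $Q(z):=\sum_{j,r}u^{r,t_0}_{j,r}(s,z)u^{r,t_0}_{r,j}(s,z)$ against the kernel $K_{n,i}$. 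Because $Q\in H^1\cap C^1$ by the same algebra argument (each factor is in $H^1\cap C^1$ and, in $n\le 3$, this space is closed under pointwise product), the identity $\partial_y\bigl(K_{n,i}\ast Q\bigr)=K_{n,i}\ast\partial_y Q$ reduces matters to controlling one derivative: $K_{n,i}\ast(\partial Q)$ is a Riesz-type singular integral of an $L^2\cap L^\infty$ function, hence bounded in $L^2$ by Calder\'on--Zygmund and, to obtain $C^0$-regularity, one either uses the Fourier-multiplier description in the toroidal setting (as in Lemma \ref{lplem}) or invokes the standard fact that $K_{n,i}\ast(\cdot)$ maps $C^{0,\alpha}\cap L^2$ into $C^{0,\alpha}\cap L^2$ for any $\alpha\in(0,1)$, combined with the slight extra regularity $Q\in C^1$ which supplies the needed H\"older margin.

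The main obstacle will be step (ii) for the $C^1$ part of the Leray projection term: the bare endpoint $C^1\to C^1$ boundedness of the Riesz transform is false (there is a logarithmic loss), so I would not try to prove it directly. Instead I would exploit the extra regularity coming from step (i): since $Q$ is genuinely in $H^1\cap C^1$, interpolating between $H^1$ and $C^1$ gives $Q\in C^{0,\alpha}$ for some $\alpha>0$, and then the Calder\'on--Zygmund estimate on H\"older spaces closes the argument with no log loss. Combining this with the $H^1$-bound obtained from Plancherel (exactly as in the proof of Lemma \ref{lplem}) yields the claimed $C^1\cap H^1$ regularity of the full functional. Uniformity of these bounds in $(t_s,x_s)$ follows, as noted elsewhere in the paper, from the fact that all intermediate constants depend only on $C_2$ and the fixed horizon $T$.
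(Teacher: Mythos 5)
Your proposal is correct in outline and shares the paper's two-part structure (a contraction argument for persistence of $H^2\cap C^2$, then a product-plus-kernel argument for the Euler--Leray data function), but the key second step is handled by a genuinely different technique. The paper treats the Leray term through the Poisson equation: it observes that $Q:=\sum_{j,r}u^{r,t_0}_{j,r}u^{r,t_0}_{r,j}\in C^1\cap L^2\subset H^1_{loc}$, invokes interior elliptic regularity to gain two derivatives for the potential $K_n\ast Q$, and concludes that its gradient lies in $H^2_{loc}$, whence the claimed regularity (and, via Sobolev embedding in $n\le 3$, the H\"older/Lipschitz-type control used later). You instead work directly with the kernel $K_{n,i}$, writing $\partial(K_{n,i}\ast Q)=K_{n,i}\ast\partial Q$ and appealing to Calder\'on--Zygmund and H\"older-space bounds. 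Your approach is more explicit about the one point the paper glosses over: the endpoint $C^1\to C^1$ failure of Riesz-type operators, which you correctly repair by extracting a H\"older margin from $Q\in H^1\cap C^1$ before applying the Schauder-type estimate. The paper's elliptic-regularity route buys the same conclusion with less harmonic analysis but only in local Sobolev spaces, and it silently relies on the embedding $H^2_{loc}\hookrightarrow C^{0,1/2}$ in $n=3$ (its final claim that the first derivatives are \emph{Lipschitz} actually overshoots what that embedding gives; the lemma as stated only needs $C^1\cap H^1$, which both arguments deliver). One small caveat on your side: the map $f\mapsto K_{n,i}\ast f$ is smoothing of order one rather than a zero-order Calder\'on--Zygmund operator, so the $L^2$ bound for the undifferentiated Leray term needs the low-frequency observation that $\hat Q$ is bounded (since $Q\in L^1\cap L^2$) and $\int_{|\xi|<1}|\xi|^{-2}d\xi<\infty$ for $n=3$; this is easily supplied and does not affect the argument.
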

\begin{proof}
The first statement follows form local contraction of a standard iteration local solution scheme with respect to a $H^2\cap C^2$ norm (supremum over local time). Concerning the regularity of the  Leray data function we first observe that for  functions $u^{r,t_0}_{j}(s,.)\in C^2\cap H^2$ such that the first spatial derivatives are in $C^1$ and globally bounded. Hence we have
\begin{equation}\label{l2est}
 u^{r,t_0}_{j,r}(s,.) u^{r,t_0}_{r,j} (s,.)\in C^1\cap L^2\subset H^1_{loc}
\end{equation}
where $H^s_{loc}$ denotes the Sobolev space which is  locally $H^s$ for exponent $s\in {\mathbb R}$ (Sobolev $L^2$-theory). As data are in $H^1_{loc}$ we know from the regularity of uniformly elliptic operators of second order that $u^{r}_{t_0}(s,.)\in H^{1+2}_{loc}=H^3_{loc}$. Hence, $u^{r,t_0}_{j,r}(s,.)\in H^2_{loc}$. For dimension $n=2$ it follows that the right side is indeed in $H^2_{loc}$ (product rule for Sobolev spaces. Hence for data in $H^2\cap C^2$ we have indeed a regular Leray data function such that the first order spatial derivatives are Lipschitz.   

\end{proof}
Hence given a time horizon $T>0$ for the choice
\begin{equation}
r\sim  \frac{1}{1+T}
\end{equation}
and Lipschitz continuity (Lipschitz constant $l_0$) of the (first order spatial derivatives of)Euler-Leray data function, i.e., Burgers term operator plus Leray projection form operator applied to data $u^{r,t_0}_i(s,.)$, the nonlinear terms in (\ref{Navlerayusubschemepreia}) for $n\geq 3$ have an upper bound 
\begin{equation}
Cr\mu(s)(4\pi \mu(s)\nu r^2 )^{\delta}\lesssim \left( \frac{1}{1+T}\right)^{1+2\delta},~\delta \in \left(0,1 \right).
\end{equation}
 Note that the damping term (i.e. the potential term, or the second term on the right side of (\ref{Navlerayusubschemepreia})) has no parameter $r$ and the damping over a time interval $\Delta_0$ is
 of order $\frac{1}{T}\Delta_0$ times the data norm (for small $\Delta_0$).
More precisely, we have
\begin{lem}
Given a time horizon $T>0$ and ${\big |}u^{r,t_0}_i(0,.){\big |}_{H^m\cap C^m}\leq C_m$  there exists a constant $c(n,m)>0$ depending only on the dimension $n$ and the regularity order $m\geq 2$ and   a parameter 
\begin{equation}
r=\frac{1}{c(n,m)(C_m+1)^2(1+T)}
\end{equation} 
such that the representation in (\ref{Navlerayusubschemepreia}) holds on the time interval $\left[0,\frac{1}{\sqrt{3}}\right]$ for $0\leq |\beta|\leq m$ and we have for all $s\in \left[0,\frac{1}{\sqrt{3}}\right]$ 
\begin{equation}
{\big |}u^{r,t_0}_i(s,.){\big |}_{H^m\cap C^m}\leq {\big |}u^{r,t_0}_i(0,.){\big |}_{H^m\cap C^m}\leq C_m.
\end{equation}

\end{lem}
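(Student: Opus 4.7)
The plan is to prove the claim by a continuation argument combining the local contraction in $H^m\cap C^m$ (given by the immediately preceding lemma) with the quantitative Lipschitz estimate (\ref{uppbound})--(\ref{releps}) and the damping furnished by the $\mu(\sigma)u^{r,t_0}_{i,\beta}$ term in the representation (\ref{Navlerayusubschemepreia}). The crucial structural observation, already recorded in (\ref{mu}), is that the damping multiplier is bounded below by $\mu_0=3\sqrt{3}/(8(1+T))$ independently of the scaling parameter $r$, whereas the nonlinear prefactor satisfies $r\mu^{\tau,2}(\sigma)\le r(1+T)$ and can therefore be made arbitrarily small by choosing $r$ small.

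First I would introduce
\[
s^{*}:=\sup\bigl\{\,s\in[0,1/\sqrt{3}]\ :\ \sup_{s'\in[0,s]}|u^{r,t_0}_i(s',\cdot)|_{H^m\cap C^m}\le C_m\,\bigr\},
\]
which is strictly positive by the preceding local contraction lemma, and argue by contradiction that $s^{*}=1/\sqrt{3}$. Applying (\ref{Navlerayusubschemepreia}) at $s^{*}$ for every $0\le|\beta|\le m$ together with Leibniz/Moser product estimates in Sobolev spaces (valid for $n\le 3$ and $m\ge 2$), I would estimate each constituent separately: the convolution of the data against $G^{\mu,r}_{\nu}$ is a contraction and contributes at most $|u^{r,t_0}_i(0,\cdot)|_{H^m\cap C^m}\le C_m$; the $\mu(\sigma)u^{r,t_0}_{i,\beta}$ term yields a strictly negative increment of order $\mu_0 s^{*} C_m$; and the Burgers and Leray--projection convolutions are estimated by combining (\ref{gmu}) with the Lipschitz continuity of the Euler--Leray data functional (and its first order derivatives) supplied by the preceding lemma, producing a joint upper bound of the form
\[
C(n,m)\,r(1+T)\,(\mu_0\nu r^{2})^{\delta}\,C_m^{2}\,(s^{*})^{1-\delta}\ \lesssim\ \frac{C(n,m)\,C_m^{2}}{(1+T)^{1+2\delta}}
\]
for some $\delta\in(0,1)$ granted by the Lipschitz trick in (\ref{lipestest}).

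With the explicit choice $r=1/[c(n,m)(C_m+1)^{2}(1+T)]$ and $c(n,m)$ taken large enough to absorb all Sobolev-product constants, the universal constant $C$ from (\ref{gmu}) and the Lipschitz constant of the Leray data functional (which depends only on $n$, $m$ and $C_m$), the above nonlinear contribution becomes strictly smaller than the damping loss $\mu_0 s^{*}C_m$. Hence the $H^m\cap C^m$--norm at $s^{*}$ remains strictly below $C_m$, contradicting the definition of $s^{*}$ unless $s^{*}=1/\sqrt{3}$, which is the stated conclusion.

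The main obstacle I anticipate is the bookkeeping of the nonlinear estimate in the full $H^m\cap C^m$ norm, not merely at the pointwise/$L^\infty$ level used in the preceding section. Concretely, one must carry genuine Leibniz--Moser product estimates for $u^{r,t_0}_{j}u^{r,t_0}_{i}$ and for $L^{i}(\sum u^{r,t_0}_{l,j}u^{r,t_0}_{j,l})$ through convolutions with the first-order derivative kernel $G^{\mu,r}_{\nu,j}$ (using the Lipschitz device of (\ref{lipestest}) to push $\delta$ below $1/2$), uniformly in the scaling parameter $r$, and then verify that every constant arising along the way can be packed into a single dimension- and regularity-dependent constant $c(n,m)$ so that the prescribed scaling $r\sim 1/[(C_m+1)^{2}(1+T)]$ genuinely closes the argument across the whole transformed interval $[0,1/\sqrt{3}]$.
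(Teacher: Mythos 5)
The paper in fact supplies no proof of this lemma at all: the lemma is followed only by a remark on the computability of $c(n,m)$, and the intended argument is the surrounding discussion, namely the representation (\ref{Navlerayusubschemepreia}), the bounds (\ref{mu}), (\ref{gmu}), (\ref{uppbound})--(\ref{releps}), the Lipschitz lemma for the Euler--Leray data function, and the observation that the nonlinear prefactor carries a factor $r$ while the damping term does not. Your continuation argument assembles exactly these ingredients, so you are on the paper's intended route and, if anything, more explicit about how the pieces fit together; your closing paragraph also correctly identifies the missing bookkeeping (genuine Leibniz--Moser product estimates in the full $H^m\cap C^m$ norm, uniform in $r$), which the paper does not carry out either.

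One step in your write-up deserves a warning, because as stated it would fail: you assert that the term $-\int_0^{s}\int\mu(\sigma)\,u^{r,t_0}_{i,\beta}(\sigma,y)\,G^{\mu,r}_{\nu}(s,x;\sigma,y)\,dy\,d\sigma$ ``yields a strictly negative increment of order $\mu_0 s^{*}C_m$.'' From the Duhamel representation alone this is not true: $u^{r,t_0}_{i,\beta}$ is sign-indefinite and the kernel is positive, so after taking norms this term is an error term of size $\lesssim \mu(\cdot)\,s^{*}C_m$ with the \emph{wrong} sign for your bookkeeping, and a naive triangle inequality gives $|u(s^{*})|\le C_m+\mu_0 s^{*}C_m+(\text{nonlinear})$, which does not close no matter how small $r$ is. To actually realize the damping you must either fold the potential into the kernel (replace $G^{\mu,r}_{\nu}$ by the fundamental solution of $\partial_s-\mu^{1}\Delta+\mu$, which carries a factor $e^{-\int\mu}$ multiplying the data term and all source terms) or run a Gronwall-type differential inequality for $|u(s,\cdot)|^2_{H^m}$ in which the $-\mu u$ term genuinely contributes $-\mu_0|u|^2$. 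The paper glosses over the same point (``a growth which is offset by the potential damping term''), so this is a defect you have inherited rather than introduced, but in a self-contained proof it is the one place where the argument as literally written does not go through.
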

\begin{rem}
The constant $c(n,m)$ can be computed explicitly and contains upper bounds of local $L^1$ of $K_{,i}$ as factor of one of its summands etc.  
\end{rem}

Now let a time horizon $T>0$ be given, and assume that for the paramter $r>0$ of the preceding lemma
\begin{equation}
(1+t_0)C_m\geq (1+t_0){\big |}u^{r,t_0}_i(0,.){\big |}_{H^m\cap C^m}={\big |}v^{r,t_0}_i(t_0,.){\big |}_{H^m\cap C^m}
\end{equation}
has been proved up to some time $t_0\geq 0$. Then according to the Lemma above for this $r>0$ and  for all $s\in \left[0,\frac{1}{\sqrt{3}}\right]$ we have
\begin{equation}
{\big |}u^{r,t_0}_i(s,.){\big |}_{H^m\cap C^m}\leq {\big |}u^{r,t_0}_i(0,.){\big |}_{H^m\cap C^m}\leq C_m.
\end{equation}
Hence for $t\in [t_0,t_0+0.5]$ we have
\begin{equation}
\begin{array}{ll}
{\big |}v^{r,t_0}_i(t,.){\big |}_{H^m\cap C^m}\leq (1+t(s)){\big |}u^{r,t_0}_i(s,.){\big |}_{H^m\cap C^m}\\
\\
\leq (1+t){\big |}u^{r,t_0}_i(0,.){\big |}_{H^m\cap C^m}\leq (1+t)C_m.
\end{array}
\end{equation}
Hence,  for given time horizon $T>0$ there exists $r\sim \frac{1}{1+T}>0$ such that for all $0\leq t\leq T$
\begin{equation}
{\big |}v^{r,t_0}_i(t,.){\big |}_{H^m\cap C^m}\leq (1+t)C_m.
\end{equation}

We observe that this argument depends on the (mathematically) strong assumption $H^m\cap C^m,~m\geq 2$. The CKN-theory provides this situation for any small time, and it tells us about the asymptotic behaviour of singularity upper bounds near the $L^2$-data.


\begin{thebibliography}{19}
\baselineskip=12pt



\bibitem{KT}	
{\sc Hopf, E.} {\em \"{U}ber die Anfangswertaufgabe der hydrodynamischen Grundgleichungen}, Math. Nachr. 4,  485-512, 1951.

\bibitem{CKN}
{\sc Cafarelli, L., Kohn, R., Nirenberg, L.} {\em
Partial regularity of suitable weak solution of the Navier Stokes equation}, Comm. Pure Appl. Math.,  [math.AP], 35, 1982,


\bibitem{L}
{\sc Lin, F.} {\em
A new proof of the Cafarelli-Kohn-Nirenberg theorem}, Comm. Pure Appl. Math.,  [math.AP], 51, 3, 241-257, 1998.

\bibitem{N}
 {\sc Nirenberg, L.} {\em On elliptic partial differential equations}, Ann. Scuola Norm. Sup. Pisa, p. 115-162, vol. 13, (1959).

\end{thebibliography}
\end{document}